%
%
%


\documentclass{amsart}
\usepackage{times}
\usepackage{graphicx}
\usepackage{amssymb,amsmath}
\usepackage{subfigure}
\usepackage[letterpaper=true,colorlinks=true,linkcolor=red,filecolor=green,citecolor=red,pdfpagemode=None]{hyperref}




\newtheorem{theorem}{Theorem}[section]
\newtheorem{lemma}[theorem]{Lemma}

\theoremstyle{definition}

\theoremstyle{remark}
\newtheorem{remark}[theorem]{Remark}

\numberwithin{equation}{section}

\begin{document}

 \title[Convergence results with natural norms]{Convergence results with natural norms: Stabilized Lagrange multiplier method for elliptic interface problems}


\author{Sanjib Kumar Acharya}
\address{The LNM Institute of Information Technology,
              Jaipur 302031, Rajasthan, India}
\curraddr{}
\email{acharya.k.sanjib@gmail.com}
\thanks{}

\author{Ajit Patel}
\address{The LNM Institute of Information Technology,
              Jaipur 302031, Rajasthan, India}
\curraddr{}
\email{ajit.iitb@gmail.com}
\thanks{}

\subjclass[2010]{
65N30,  35J15, 35J25 }

\date{}

\dedicatory{}

\begin{abstract}
A stabilized Lagrange multiplier method for second order elliptic interface problems is presented
in the framework of mortar method. The requirement of  LBB (Ladyzhenskaya-Babu\v{s}ka-Brezzi)
condition  for mortar method is alleviated by introducing penalty terms in the formulation.
Optimal convergence results are established in natural norm which is independent of mesh.
Numerical experiments are conducted in support of  the theoretical derivations.
\end{abstract}

\maketitle
\section{Introduction}
The best part of  considering Lagrange multiplier formulation is: it converts a constraint problem
in to an unconstrained problem which is  comparably an easy way for implementation (see \cite{babuska}).
Also, we can evaluate both primal and flux variable simultaneously.
On the other hand, one major difficulty in considering the  Lagrange multiplier method is:
the finite dimensional problems have to obey the inf-sup condition (LBB condition)
which rejects many natural choices for approximation.
Fortunately this requirement  has been alleviated  by Barbosa and Hughes (see \cite{barbosa}).
They proposed a stabilized multiplier  method which is stable and optimally convergent with
respect to a mesh-dependent norm. In \cite{barbosa1} the convergence results of these methods are
established with natural norms. Nitsche had  introduced   a penalty term on the boundary to derive
optimal estimates for approximating elliptic problems with nonhomogeneous Dirichlet
boundary condition without enforcing boundary condition on the finite element spaces in \cite{nitsche}.

These ideas are extended to multi-domain problems with non-matching grids by Hansbo
\textrm{et al.} in \cite{Hansbo} and Becker \textrm{et al.} in \cite{becker}.
Wherein, the optimal convergence results are established
in mesh-dependent norm. In \cite{Hansbo},  a
stabilization method has been proposed, which uses
global polynomials as multipliers to avoid the cumbersome integration of products of
unrelated mesh functions and derived the stability under the
condition that the approximation space for the interface multiplier
contains the constant.  A Lagrange multiplier method with penalty  for multi-domain
problems with non-matching  grid  is discussed by Patel
(see \cite{patel2}) which is well-posed and stable but due to
inconsistency there is loss of accuracy.  Stenberg has pointed
out a close connection between Nitsche's method and stabilized schemes and
proposed it as mortaring Nitsche method (see \cite{stenberg}). For a detail study, we refer to
\cite{becker,bernardy,belagacem-h,Hansbo,stenberg}.

In this paper, we  extend the ideas of
\cite{barbosa1} to multi-domain problems with non-matching grid
and establish the optimal error estimates in natural norm. Here, the multipliers
are simply the nodal basis functions restricted to the interface. Error estimates
are obtained with an assumption that: the multiplier space satisfies
the strong regularity property in the sense of Babu\v{s}ka (see \cite{babuska}).
We give some computational results
in support of the theoretical results.

A  brief outline is as follows. We recall some functional spaces and approximation results
in Section 2. In Section 3 we define the stabilized Lagrange multiplier methods
for an elliptic interface problem and derive the error estimates.
We give a matrix formulation of the method in Section 4. In Section 5, some numerical
experiments are given. Finally, we concluded in Section 6.

\section{Preliminaries}
Let $\Omega$ be an open bounded polygonal domain in  $\mathbb{R}^{2}$  with boundary $\partial\Omega$.
We define $\alpha=(\alpha_{1},\alpha_{2})$ as a $2$-tuple of non-negative integers $\alpha_{i},i=1,2$ and with $|\alpha|=\alpha_{1}+\alpha_{2}$ set
$$D^{\alpha}=\frac{\partial^{|\alpha|}}{\partial x_{1}^{\alpha_{1}}\partial x_{2}^{\alpha_{2}}}\cdot$$
The Sobolev space of order $m$ (see \cite{grisvard})  over $\Omega$ is defined  as
\begin{equation*}
H^{m}(\Omega)=\left\{v\in L^{2}(\Omega):D^{\alpha}v\in L^{2}(\Omega), |\alpha|\leq m\right\}
\end{equation*}
equipped with the norm and semi-norm
\begin{equation*}
{||v||}_{H^{m}(\Omega)}=\left(\sum_{|\alpha|\leq m}\int_{\Omega}|D^{\alpha}v|^{2}dx\right)^{1/2},\hspace{0.2cm}
{|v|}_{H^{m}(\Omega)}=\left(\sum_{|\alpha|= m}\int_{\Omega}|D^{\alpha}v|^{2}dx\right)^{1/2},
\end{equation*}
respectively.
Let $r=m+\sigma$ be a positive real number, where $m$  and  $\sigma$ are the integral and fractional part of $r$ respectively. The fractional Sobolev space $H^{r}(\Omega)$ is defined as
\begin{equation*}
H^{r}(\Omega)=\bigg\{v\in H^{m}(\Omega):\int_{\Omega}\int_{\Omega}\frac{\big(D^{\alpha}v(x)
-D^{\alpha}v(y)\big)^{2}}{|x-y|^{2+2\sigma}}dxdy<\infty,~|\alpha|=m\bigg\}
\end{equation*}
with the norm
\begin{equation}\nonumber
{||v||}_{H^{r}(\Omega)}=\bigg({||v||}^{2}_{H^{m}(\Omega)}
+\sum_{|\alpha|=m}\int_{\Omega}\int_{\Omega}\frac{\big(D^{\alpha}v(x)
-D^{\alpha}v(y)\big)^{2}}{|x-y|^{2+2\sigma}}dxdy\bigg)^{1/2}.
\end{equation}
We shall denote by $H^{r-1/2}(\partial\Omega)$ the space of traces ${v|}_{\partial\Omega}$ over $\partial\Omega$ of the functions $v\in H^{r}(\Omega)$ equipped with the norm $${||g||}_{H^{r-1/2}(\partial\Omega)}=\inf_{v\in H^{r}(\Omega),v{|}_{\partial\Omega}=g}{||v||}_{H^{r}(\Omega)}$$ and $$H^{1}_{0}(\Omega)=\{v\in H^{1}(\Omega):v|_{\partial\Omega}=0\}.$$
Let $H^{-1/2}(\partial\Omega)$ be the dual space of $H^{1/2}(\partial\Omega)$ equipped with the norm $${||\mu||}_{H^{-1/2}(\partial\Omega)}=\sup_{g\in H^{1/2}(\partial\Omega),~g\neq 0}\frac{|\langle\mu,g\rangle_{-1/2,\partial\Omega}|}{||g||_{H^{1/2}(\partial\Omega)}},$$ where $\langle\cdot,\cdot\rangle_{-1/2,\partial\Omega}$ is the duality pairing between $H^{-1/2}(\partial\Omega)$ and $H^{1/2}(\partial\Omega)$.
With $\Gamma^{\ast}\subset \partial\Omega$, let $\tilde{v}$ be an
extension of $v\in H^{1/2}(\Gamma^{\ast})$ by zero to all of
$\partial\Omega$. Then we set $H^{1/2}_{00}(\Gamma^{\ast})$, a subspace
of $H^{1/2}(\Gamma^{\ast})$ as
\begin{eqnarray*}
H^{1/2}_{00}(\Gamma^{\ast})=\{v\in H^{1/2}(\Gamma^{\ast})\colon \tilde{v}\in
H^{1/2}(\partial\Omega)\}.
\end{eqnarray*}
The norm in $H^{1/2}_{00}(\Gamma^{\ast})$ is defined by:
$$\|g\|_{H^{1/2}_{00}(\Gamma^{\ast})}=\displaystyle
\inf_{v\in H^1_{0,\partial\Omega\backslash \Gamma^{\ast}}(\Omega),v_{|_{\Gamma^{\ast}}}=g}{\|v\|}_{H^1(\Omega)}.$$

Let $H^{-1/2}_{00}(\Gamma^{\ast})$ be the dual space of
$H^{1/2}_{00}(\Gamma^{\ast})$. Also $\langle\cdot,\cdot\rangle_{00,\Gamma^{\ast}}$ denote the
duality pairing between $H^{-1/2}_{00}(\Gamma^{\ast})$ and
$H^{1/2}_{00}(\Gamma^{\ast})$ and let the norm on $H^{-1/2}_{00}(\Gamma^{\ast})$
be defined by
\begin{eqnarray*}
\|\varphi\|_{H^{-1/2}_{00}(\Gamma^{\ast})}=\displaystyle\sup_{\mu\in
H^{1/2}_{00}(\Gamma^{\ast}),
\mu\not=0}\frac{|\langle\varphi,\mu\rangle_{00,\Gamma^{\ast}}|}{{\|\mu\|}_{H^{1/2}_{00}(\Gamma)}}.
\end{eqnarray*}
Note that, $H^{-1/2}(\Gamma^{\ast})$ is continuously embedded into $H^{-1/2}_{00}(\Gamma^{\ast})$
(see \cite{belagacem-h}).

\begin{figure}[h]
\includegraphics*[width=6.5cm,height=5.5cm]{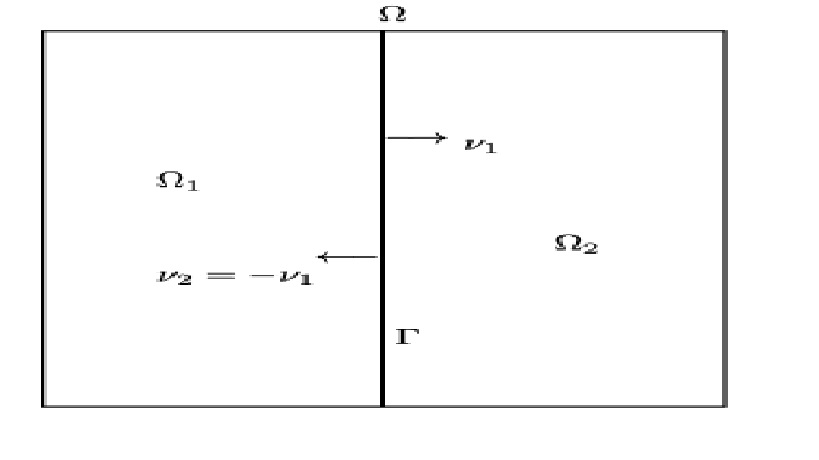}
\centering
\caption{$\bar\Omega=\bar\Omega_1\cup\bar\Omega_2$, $\nu_{1}$ and
$\nu_2$ represent the outward normal components.}
\label{ddmfig2}
\end{figure}

Let $\bar{\Omega}=\bar{\Omega}_{1}\cup\bar{\Omega}_{2}$ and  $\Gamma$ be the common interface $\overline{\partial\Omega}_{1}\cap\overline{\partial\Omega}_{2}$ (see Figure \ref{ddmfig2}). We denote $\nu_{1}$ the unit outward normal oriented from $\Omega_{1}$ towards $\Omega_{2}$ and $\nu_{2}=-\nu_{1}=\nu$. For any function $v$ let $v_{i}=v|_{\Omega_{i}}$. Let $$H^{1}_{D}(\Omega_{i})=\{v_{i}\in H^{1}(\Omega_{i}):v_{i}|_{\partial\Omega\cap\partial\Omega_{i}}=0\}.$$
Now we define $$X=\{v\in L^{2}(\Omega):v_{i}\in H^{1}_{D}(\Omega_{i})~i=1,2\}$$
equipped with the norm $${||v||}_{X}=\left(\sum^{2}_{i=1}{||v||}^{2}_{H^{1}(\Omega_{i})}\right)^{1/2}$$  and the multiplier space $M=H_{00}^{-1/2}(\Gamma)$.

Let $\mathcal{T}_{h_{i}}$ be a family of triangulation (in the sense of Ciarlet, see \cite{ciar}) of $\Omega_{i},i=1,2$ with triangles or parallelograms.
For $K\in\mathcal{T}_{h_{i}},i=1,2$ let $h_{K}=\text{diam}~K$, $\sigma_{K}=\sup\{\text{diam}~B:B ~\text{a ball in}~ K\}$, and $h_{i}=\max{h_{K}}$. Suppose   there exist  positive constants  $\kappa$ and $\varrho$  independent of $h_{i}$ such that for all $K\in\mathcal{T}_{h_{i}},i=1,2$ verifies the quasiuniform condition  $\frac{h_{i}}{h_{K}}\leq\kappa$ and the shape regularity $\frac{h_{K}}{\sigma_{K}}\leq\varrho$.

We define finite dimensional subspaces on each subdomain $\Omega_{i}$  as $$X_{h_{i}}=\{v_{h_{i}}\in C^{0}(\bar{\Omega}_{i}):v_{h_{i}}|_{K}\in\mathcal{P}_{1}(K)\hspace{0.2cm}for\hspace{0.2cm}K
\in\mathcal{T}_{h_{i}},v_{i}=0\hspace{0.2cm}on\hspace{0.2cm}\partial\Omega_{i}\cap\partial\Omega\}.$$ We set  $h=\max\{h_{1},h_{2}\}$ and define the global space $X_{h}\subset X$ defined as
$$X_{h}=\{v_{h}\in L^{2}(\Omega):v_{h_{i}}\in X_{h_{i}}~\text{for}~i=1,2\}.$$

Let $W_{h_{i}}$ be the restriction of $X_{h_{i}}$ to $\Gamma_{i}=\partial\Omega_{i}\cap\Gamma$. Assume two different 1D triangulations on $\Gamma$, $\mathcal{T}_{h_{1}}(\Gamma)$ and $\mathcal{T}_{h_{2}}(\Gamma)$ and correspondingly two different trace spaces   $W_{h_{1}}$ and $W_{h_{2}}$. For our convenience we may choose the multiplier space $W_{h}$ to be  $W_{h_{2}}$.
Now we recall the following approximation results (see \cite{ciar,patel2}).

\begin{lemma}
\label{lem2}
For all $v_{h_{i}}\in X_{h_{i}}$ there exists a constant $C_{I}>0$ independent of $h$  such that
\begin{equation}{||h_{i}^{1/2}\nabla v_{h_{i}}\cdot \nu_{i}||}_{L^{2}({\Gamma}_{i})}
\leq C_{I}{||\nabla v_{h_{i}}||}_{L^{2}(\Omega_{i})}.\end{equation}
\end{lemma}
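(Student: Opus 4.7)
The plan is to prove the inequality by a scaling/inverse-estimate argument, element by element, followed by summation over the boundary strip and use of the quasi-uniformity hypothesis $h_i/h_K \le \kappa$.

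First I would localize to the mesh. Let $\mathcal{E}_i^\Gamma$ denote the collection of edges of $\mathcal{T}_{h_i}$ lying on $\Gamma_i$, and for each $e \in \mathcal{E}_i^\Gamma$ let $K_e \in \mathcal{T}_{h_i}$ be the (unique) element having $e$ as an edge. Since $v_{h_i}|_{K_e} \in \mathcal{P}_1(K_e)$, the gradient $\nabla v_{h_i}|_{K_e}$ is a constant vector in $\mathbb{R}^2$, and in particular $\nabla v_{h_i} \cdot \nu_i$ restricted to $e$ is a constant. Hence
\begin{equation*}
\|\nabla v_{h_i}\cdot \nu_i\|_{L^2(e)}^2 = |e|\,\bigl(\nabla v_{h_i}\cdot\nu_i\bigr)^2 \le |e|\,|\nabla v_{h_i}|^2_{K_e}.
\end{equation*}

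Next I would relate $|e|$ and $|\nabla v_{h_i}|^2_{K_e}$ back to the $L^2$-norm of $\nabla v_{h_i}$ over $K_e$. Since $\nabla v_{h_i}$ is constant on $K_e$,
\begin{equation*}
\|\nabla v_{h_i}\|_{L^2(K_e)}^2 = |K_e|\,|\nabla v_{h_i}|^2_{K_e}.
\end{equation*}
Shape regularity ($h_{K_e}/\sigma_{K_e}\le\varrho$) implies $|K_e|\ge c_\varrho h_{K_e}^2$ and $|e|\le h_{K_e}$, with $c_\varrho>0$ depending only on $\varrho$. Combining,
\begin{equation*}
\|\nabla v_{h_i}\cdot\nu_i\|_{L^2(e)}^2 \le \frac{|e|}{|K_e|}\,\|\nabla v_{h_i}\|_{L^2(K_e)}^2 \le \frac{1}{c_\varrho\,h_{K_e}}\,\|\nabla v_{h_i}\|_{L^2(K_e)}^2.
\end{equation*}

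Now I would multiply by $h_i$, invoke the quasi-uniformity bound $h_i \le \kappa\,h_{K_e}$ to turn $h_i/h_{K_e}$ into a uniform constant, and sum over $e \in \mathcal{E}_i^\Gamma$. Because each $K_e$ has at most one edge on $\Gamma_i$ (or a uniformly bounded number, absorbed in the constant), the sum of $\|\nabla v_{h_i}\|_{L^2(K_e)}^2$ over $e$ is bounded by $\|\nabla v_{h_i}\|_{L^2(\Omega_i)}^2$, giving
\begin{equation*}
h_i\,\|\nabla v_{h_i}\cdot\nu_i\|_{L^2(\Gamma_i)}^2 \le \frac{\kappa}{c_\varrho}\,\|\nabla v_{h_i}\|_{L^2(\Omega_i)}^2,
\end{equation*}
with $C_I^2 = \kappa/c_\varrho$ independent of $h$.

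There is no serious obstacle here; the argument is entirely standard. The one point worth care is the use of both mesh hypotheses: shape regularity is what produces the edge/area ratio bound $|e|/|K_e|\lesssim h_{K_e}^{-1}$ via a reference-element scaling (alternatively one may transport to the reference triangle and use equivalence of norms on the finite-dimensional space $\mathcal{P}_1$), while quasi-uniformity is what lets us replace $h_{K_e}$ by the global parameter $h_i$. If one wanted a version valid for higher-order $\mathcal{P}_k$ elements, the reference-element norm-equivalence step would be the place to do the real work, but for $\mathcal{P}_1$ the argument above suffices.
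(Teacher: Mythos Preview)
Your argument is correct and is exactly the standard scaling/inverse-estimate proof of this discrete trace inequality. The paper itself does not give a proof of this lemma; it simply states it as a recalled result with references to \cite{ciar,patel2}, so there is nothing to compare against beyond noting that your proof is the one those references (in essence) contain.
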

We assume  $C$ denotes a generic constant throughout the discussion.
\begin{lemma}
\label{inter}
 Let $v_{i}\in H^{l}(\Omega_{i})$ for $l>1$ and $i=1,2$. Then there exist constants $C>0$ independent of  $h$  and a sequence $I_{h_{i}}v_{i}\in X_{h_{i}}$ such that for any $0\leq l_{1}\leq l$
\begin{align}
&{||v_{i}-I_{h_{i}}v_{i}||}_{H^{l_{1}}(\Omega_{i})}\leq C~h_{i}^{l-l_{1}}{||v_{i}||}_{H^{l}(\Omega_{i})},\label{s30}\\
&{||v_{i}-I_{h_{i}}v_{i}||}_{L^{2}(\Gamma_{i})}\leq C~h_{i}^{l-1/2}{||v_{i}||}_{H^{l}(\Omega_{i})}.\label{s31}
\end{align}
\end{lemma}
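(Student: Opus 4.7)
The plan is to take $I_{h_i}$ as the standard nodal Lagrange interpolation operator associated with $X_{h_i}$; since $l>1$, the Sobolev embedding $H^{l}(\Omega_i)\hookrightarrow C^{0}(\bar{\Omega}_i)$ in $\mathbb{R}^{2}$ ensures that the nodal values of $v_i$ are well defined, so that $I_{h_i}v_i\in X_{h_i}$ makes sense. (One could equally well use the Scott--Zhang quasi-interpolant, which satisfies the same bounds without invoking the embedding and covers the case $l=1$.)

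For (\ref{s30}) I would proceed element by element in the classical Ciarlet manner. Pull back to a fixed reference element $\hat{K}$ through the affine map $F_K:\hat{K}\to K$. Since $\hat{I}$ preserves $\mathcal{P}_1$, the Bramble--Hilbert lemma on $\hat{K}$ yields
\begin{equation*}
\|\hat{v}-\hat{I}\hat{v}\|_{H^{l_{1}}(\hat{K})}\leq C\,|\hat{v}|_{H^{l}(\hat{K})}.
\end{equation*}
The standard scaling $|w|_{H^{s}(K)}\simeq h_K^{s-1}|\hat{w}|_{H^{s}(\hat{K})}$ (in $2$D), which is legitimate because $\mathcal{T}_{h_i}$ is quasi-uniform and shape-regular with constants $\kappa,\varrho$, then gives
\begin{equation*}
\|v_i-I_{h_i}v_i\|_{H^{l_{1}}(K)}\leq C\,h_K^{l-l_{1}}\,|v_i|_{H^{l}(K)}.
\end{equation*}
Squaring, summing over $K\in\mathcal{T}_{h_i}$, and using $h_K\leq h_i$ produces (\ref{s30}).

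For the trace bound (\ref{s31}) I would combine (\ref{s30}) with the scaled trace inequality
\begin{equation*}
\|w\|_{L^{2}(e)}^{2}\leq C\bigl(h_K^{-1}\|w\|_{L^{2}(K)}^{2}+h_K\,\|\nabla w\|_{L^{2}(K)}^{2}\bigr)
\end{equation*}
applied on each element $K$ carrying an edge $e\subset\Gamma_i$, with $w=v_i-I_{h_i}v_i$. Inserting (\ref{s30}) with $l_{1}=0$ and $l_{1}=1$ yields $\|v_i-I_{h_i}v_i\|_{L^{2}(e)}^{2}\leq C\,h_K^{2l-1}\|v_i\|_{H^{l}(K)}^{2}$, and summing over the boundary strip of elements adjacent to $\Gamma_i$ delivers (\ref{s31}).

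The principal technical obstacle is the fractional-order case: the Bramble--Hilbert step on $\hat{K}$ has to be justified for non-integer Sobolev norms, which is done either by direct manipulation of the Slobodeckij seminorm or by interpolation between integer endpoints. A related subtlety is that, because $X_{h_i}$ only contains $\mathcal{P}_1$ polynomials, the true exponent is $\min(l,2)-l_{1}$; the statement should be read under the implicit restriction $l\le 2$ (or with $l$ replaced by $\min(l,2)$), since no higher rate is achievable with affine elements.
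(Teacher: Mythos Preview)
Your argument is correct and is precisely the standard textbook route (Bramble--Hilbert plus affine scaling for \eqref{s30}, scaled trace inequality for \eqref{s31}); your caveats about fractional orders and the implicit saturation at $\min(l,2)$ for $\mathcal{P}_1$ elements are also on point. The paper itself does not prove this lemma: it is stated as a recalled result with references to Ciarlet and Patel, so there is no paper proof to compare against---what you wrote is essentially what one finds in those references.
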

For $v\in X$ we set the interpolation  $I_{h}v$  as: $I_{h}v$ equals $I_{h_{i}}v_{i}$ on each $\Omega_{i}$ for $i=1,2$.  We define the $L^{2}$ projection $\Pi$ from $M$ onto $W_{h}$  as below:

for all $\phi\in M$,
\begin{equation}
\int_{\Gamma}(\phi-\Pi \phi)\chi=0\hspace{0.2cm}\forall\chi\in W_{h}.
\end{equation}
\begin{lemma}\cite{bernardy}
\label{proj}
For any $\sigma\geq 0$, the following estimate holds: for all $\phi\in H^{1/2+\sigma}(\Gamma)$ there exists a constant $C>0$ independent of $h$ such that
\begin{equation}\label{s15}
h^{1/2}{||\phi-\Pi \phi||}_{L^{2}(\Gamma)}+{||\phi-\Pi \phi||}_{H^{-1/2}(\Gamma)}
\leq C~h^{\eta+1}{||\phi||}_{H^{1/2+\sigma}(\Gamma)}
\end{equation}
where $\eta=\min{(\sigma,1)}$.
\end{lemma}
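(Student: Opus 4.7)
The plan is to prove this in two stages: first establish the $L^2(\Gamma)$ estimate using standard approximation properties of the $L^2$-projection onto piecewise linear trace functions, and then derive the $H^{-1/2}(\Gamma)$ estimate from it by a duality (Aubin--Nitsche type) argument.

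For the $L^2$ bound, I would use that $W_h$ contains (at least) the continuous piecewise linear functions on $\mathcal{T}_{h_2}(\Gamma)$, and that the $L^2$ projection is quasi-optimal in $L^2$, so $\|\phi-\Pi\phi\|_{L^2(\Gamma)} \le \|\phi - I_h\phi\|_{L^2(\Gamma)}$ for any $1$D nodal interpolant $I_h\phi\in W_h$. Standard one-dimensional approximation gives $\|\phi-I_h\phi\|_{L^2(\Gamma)} \le C h^r \|\phi\|_{H^r(\Gamma)}$ for $0\le r\le 2$; combining the endpoint cases $r=1/2$ (for $\sigma=0$) and $r=3/2$ (for $\sigma\ge 1$) by space interpolation, and treating the intermediate range $0<\sigma<1$ directly, yields
\[
\|\phi-\Pi\phi\|_{L^2(\Gamma)} \le C\, h^{1/2+\eta}\,\|\phi\|_{H^{1/2+\sigma}(\Gamma)},
\qquad \eta=\min(\sigma,1).
\]
Multiplying through by $h^{1/2}$ produces the first half of the claimed bound. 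The only delicate point here is the use of fractional order spaces: for $\sigma \in (0,1)$ one must appeal to interpolation between $L^2(\Gamma)$ and $H^1(\Gamma)$ on the reference element and then sum, being careful that the shape regularity and quasi-uniformity of $\mathcal{T}_{h_2}(\Gamma)$ justify the local-to-global passage.

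For the $H^{-1/2}(\Gamma)$ bound I would run a duality argument. By definition,
\[
\|\phi-\Pi\phi\|_{H^{-1/2}(\Gamma)}
= \sup_{\psi\in H^{1/2}(\Gamma),\psi\neq 0}
\frac{\langle \phi-\Pi\phi,\psi\rangle_{-1/2,\Gamma}}{\|\psi\|_{H^{1/2}(\Gamma)}}.
\]
Using the Galerkin orthogonality $\int_{\Gamma}(\phi-\Pi\phi)\chi = 0$ for every $\chi\in W_h$, I rewrite the numerator as $\int_{\Gamma}(\phi-\Pi\phi)(\psi-\chi)$, take $\chi = \Pi\psi$, and apply Cauchy--Schwarz to get
\[
|\langle \phi-\Pi\phi,\psi\rangle_{-1/2,\Gamma}|
\le \|\phi-\Pi\phi\|_{L^2(\Gamma)}\,\|\psi-\Pi\psi\|_{L^2(\Gamma)}.
\]
The already-established $L^2$ bound, applied once with the regularity $H^{1/2+\sigma}$ of $\phi$ and once with the $H^{1/2}$ regularity of $\psi$ (i.e.\ $\sigma=0$, $\eta=0$, giving order $h^{1/2}$), produces
\[
\|\phi-\Pi\phi\|_{H^{-1/2}(\Gamma)}
\le C\,h^{1/2+\eta}\|\phi\|_{H^{1/2+\sigma}(\Gamma)}\cdot h^{1/2}
= C\,h^{1+\eta}\|\phi\|_{H^{1/2+\sigma}(\Gamma)}.
\]
Adding the two contributions gives the stated estimate.

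I expect the main technical obstacle to be the fractional-order $L^2$ estimate for $0<\sigma<1$: one needs either a clean interpolation argument on scales $H^{s}(\Gamma)$ for non-integer $s$, or a Bramble--Hilbert style bound combined with a scaling argument on each element of $\mathcal{T}_{h_2}(\Gamma)$. Once that is in hand, the duality step is essentially automatic, because Galerkin orthogonality of $\Pi$ lets us transfer one power of $h$ from the test side to the error side.
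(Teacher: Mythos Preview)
The paper does not give its own proof of this lemma; it is simply quoted from \cite{bernardy} and used as a tool. So there is nothing in the paper to compare your argument against.

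That said, your outline is the standard way to obtain this estimate and is essentially correct. The two-step strategy---first an $L^{2}(\Gamma)$ bound via best-approximation by piecewise linears on $\mathcal{T}_{h_2}(\Gamma)$, then the $H^{-1/2}(\Gamma)$ bound via duality and the Galerkin orthogonality of $\Pi$---is exactly how such projection estimates are proved in the mortar literature. One small caution: you invoke the nodal interpolant $I_h\phi$ and then speak of the endpoint case $r=1/2$, but the $1$D nodal interpolant is only defined for $\phi\in H^{s}(\Gamma)$ with $s>1/2$. For the range $0\le\sigma\le 1/2$ you should either replace $I_h$ by a quasi-interpolant (Cl\'ement/Scott--Zhang) that is stable on $L^{2}$, or argue directly by operator interpolation between $\|\phi-\Pi\phi\|_{L^{2}}\le\|\phi\|_{L^{2}}$ and $\|\phi-\Pi\phi\|_{L^{2}}\le Ch^{2}\|\phi\|_{H^{2}}$. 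You flagged this yourself as the ``delicate point'', and once it is handled the duality step goes through exactly as you wrote.
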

\section{Problem formulation and error estimates}
Consider a second order elliptic interface model problem: for $i=1,2$
\begin{align}
-\nabla\cdot(\beta_{i}(x)\nabla u_{i})+a_{i}(x)&u_{i} =f\hspace{0.3cm}\text{in}~\Omega_{i},\label{s1}\\
&u_{i}=0\hspace{0.3cm}\text{on}~\partial\Omega_{i}\cap\partial\Omega,\label{s3}\\
[\![u]\!]=0,~[\![\beta\nabla u\cdot\nu&]\!]=0\hspace{0.2cm}\text{along}~\Gamma,\label{s4}
\end{align}
where $\beta$  is  discontinuous along $\Gamma$ but piecewise smooth in each subdomain,  $f$ is an appropriate smooth function, $m_{l}\leq\beta_{i}(x),a_{i}(x)\leq m_{u}$  for some positive constants $m_{l}$ and $m_{u}$ and for all $x\in\overline{\Omega}$, $[\![v]\!]=v_{1}-v_{2}$. Also $\{\!\!\{v\}\!\!\}=\frac{1}{2}(v_{1}+v_{2})$.

The mixed formulation of  \eqref{s1}-\eqref{s4} is to seek a pair
 $(u,\lambda)\in X\times M$ such that
\begin{equation}
a(u,v)+b(v,\lambda)+b(u,\mu)=\mathcal{F}(v)\hspace{0.2cm}\forall ~(v,\mu)\in X\times M,\label{s5}
\end{equation}
 $$\text{where}\hspace{0.2cm}a(v,w)=\sum^{2}_{i=1}\int_{\Omega_{i}}(\beta_{i}\nabla v_{i}\cdot\nabla w_{i}+a_{i}v_{i}w_{i})~dx,~\mathcal{F}(v)=\int_{\Omega}fv~dx,$$
$$b(v,\mu)=\langle\mu,[\![v]\!]\rangle_{00,\Gamma}$$

and $\lambda=\beta_{1}\nabla u_{1}\cdot\nu_{1}=-\beta_{2}\nabla u_{2}\cdot\nu_{2}$ is the Lagrange multiplier.
We note that the bilinear forms $a(\cdot,\cdot)$ and $b(\cdot,\cdot)$ are continuous in $X\times X$ and $X\times M$ respectively.

The stabilized Nitsche's mortaring approximation is: find $(u_{h},\lambda_{h})\in X_{h}\times W_{h}$  such that
\begin{equation}\label{s16}
\mathcal{A}(u_{h},\lambda_{h};v_{h},\mu_{h})=\mathcal{F}(v_{h})\hspace{0.2cm}\text{for all}~ (v_{h},\mu_{h})\in X_{h}\times W_{h},
\end{equation}
where
\begin{align}
\mathcal{A}&(v,\mu;w,\Lambda)=a(v,w)+b(w,\mu)+S\int_{\Gamma}\gamma\mu\big\{\!\!\big\{\beta\nabla w\cdot \nu\big\}\!\!\big\}d\tau+b(v,\Lambda)\nonumber\\
&-S\int_{\Gamma}\gamma\big\{\!\!\big\{\beta\nabla v\cdot \nu\big\}\!\!\big\}\big\{\!\!\big\{\beta\nabla w\cdot \nu\big\}\!\!\big\}d\tau+\int_{\Gamma}\gamma\big\{\!\!\big\{\beta\nabla v\cdot \nu\big\}\!\!\big\}\Lambda~d\tau-\int_{\Gamma}\gamma\mu\Lambda ~d\tau.\label{s9}
\end{align}
Here, $S\in[0,1]$ and $\gamma$ is penalty parameter to be chosen later. When $S\in[0,1)$ the above formulation is unsymmetric and for $S=1$, the formulation is symmetric.
\begin{remark}
In the paper \cite{Hansbo}  a stabilized Lagrange multiplier formulation to circumvent the inf-sup condition has been introduced where they used global polynomials over the interfaces as multipliers  to avoid cumbersome integrations of functions from two different non-matching sides.  We also propose a similar formulation but here we take the trace space as multiplier space, which involves integration of non-matched functions. But our aim here is to derive the error estimates in natural norms. These estimates can be extended for the case: using global polynomials as multipliers as in \cite{Hansbo}.
\end{remark}

For any $S\in[0,1]$ it is easy to check that the problem \eqref{s16} is consistent with the original problem \eqref{s5} and hence the following lemma follows.
\begin{lemma}
The problem \eqref{s16} is consistent with the original problem \eqref{s5}. Moreover, if $(u,\lambda)$ is the solution of \eqref{s5} and $(u_{h},\lambda_{h})$ is the solution of \eqref{s16}, then
\begin{equation}\label{s27}
\mathcal{A}(u-u_{h},\lambda-\lambda_{h};v_{h},\mu_{h})=0\hspace{0.2cm}\text{for all}~ (v_{h},\mu_{h})\in X_{h}\times W_{h}.
\end{equation}
\end{lemma}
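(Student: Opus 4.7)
The plan is to verify consistency by substituting the exact solution $(u,\lambda)$ into $\mathcal{A}$ and checking that the penalty contributions cancel against the continuous weak form, and then to deduce \eqref{s27} by linearity of $\mathcal{A}$ in its first two arguments. Plugging $(v,\mu)=(u,\lambda)$ into \eqref{s9} against an arbitrary $(v_h,\mu_h)\in X_h\times W_h$, two regularity facts from \eqref{s4} drive everything. First, the matching condition $[\![u]\!]=0$ on $\Gamma$ forces $b(u,\mu_h)=\langle\mu_h,[\![u]\!]\rangle_{00,\Gamma}=0$. Second, the definition of $\lambda$ as the interfacial conormal flux, combined with the continuity of flux $[\![\beta\nabla u\cdot\nu]\!]=0$ and the orientation $\nu=\nu_2=-\nu_1$, yields the pointwise identity $\{\!\!\{\beta\nabla u\cdot\nu\}\!\!\}=\lambda$ on $\Gamma$.

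With this identity in hand, the four stabilization integrals in \eqref{s9} telescope in matched pairs: $S\int_{\Gamma}\gamma\lambda\{\!\!\{\beta\nabla v_h\cdot\nu\}\!\!\}\,d\tau$ cancels the symmetric cross term $-S\int_{\Gamma}\gamma\{\!\!\{\beta\nabla u\cdot\nu\}\!\!\}\{\!\!\{\beta\nabla v_h\cdot\nu\}\!\!\}\,d\tau$, and $\int_{\Gamma}\gamma\{\!\!\{\beta\nabla u\cdot\nu\}\!\!\}\mu_h\,d\tau$ cancels $-\int_{\Gamma}\gamma\lambda\mu_h\,d\tau$. The only surviving contribution is $a(u,v_h)+b(v_h,\lambda)$, and testing \eqref{s5} against $(v_h,0)\in X\times M$ identifies this survivor with $\mathcal{F}(v_h)$. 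Thus $\mathcal{A}(u,\lambda;v_h,\mu_h)=\mathcal{F}(v_h)$, which is exactly consistency.

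The Galerkin orthogonality \eqref{s27} then follows in one line: since $\mathcal{A}(\cdot,\cdot;v_h,\mu_h)$ is linear in its first two slots, subtracting \eqref{s16} from the consistency identity gives $\mathcal{A}(u-u_h,\lambda-\lambda_h;v_h,\mu_h)=0$ for every $(v_h,\mu_h)\in X_h\times W_h$. The only non-routine step in the whole argument is the sign-sensitive identification $\{\!\!\{\beta\nabla u\cdot\nu\}\!\!\}=\lambda$; once that is pinned down, the rest is bookkeeping plus linearity, and this is why the lemma is naturally stated together with its essentially immediate corollary.
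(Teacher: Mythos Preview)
Your argument is correct and is precisely the verification the paper has in mind; the paper itself omits the details entirely, noting only that ``it is easy to check'' consistency before stating the lemma. Your identification of the key pointwise relation $\{\!\!\{\beta\nabla u\cdot\nu\}\!\!\}=\lambda$ as the mechanism that makes the four stabilization integrals cancel in pairs, followed by the reduction to \eqref{s5} and linearity, is exactly the intended content.
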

\begin{lemma}
\label{coer}
There exists  $\alpha>0$ independent of $h$  such that for all $(v_{h},\mu_{h})\in X_{h}\times W_{h}$:
$\mathcal{A}(v_{h},\mu_{h};v_{h},-\mu_{h})\geq\alpha\bigg(\sum^{2}_{i=1}{||v_{h}||}^{2}_{H^{1}(\Omega_{i})}
+{||\gamma^{1/2}\mu_{h}||}^{2}_{L^{2}(\Gamma)}\bigg)$
for $\gamma=\gamma_{0}h$ with $0<\gamma_{0}<\frac{m_{l}}{C^{2}_{I}m_{u}^{2}}$, $C_{I}$ is a positive constant.
\end{lemma}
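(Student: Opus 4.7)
The starting point is to write out $\mathcal{A}(v_h,\mu_h;v_h,-\mu_h)$ term by term. Inserting $w=v_h$, $\mu=\mu_h$, $v=v_h$, $\Lambda=-\mu_h$ into \eqref{s9}, the two Lagrange-multiplier terms $b(v_h,\mu_h)$ and $b(v_h,-\mu_h)$ cancel, and the final term $-\int_\Gamma \gamma\mu_h(-\mu_h)d\tau$ becomes $\|\gamma^{1/2}\mu_h\|_{L^2(\Gamma)}^2$. Collecting what remains,
\begin{equation*}
\mathcal{A}(v_h,\mu_h;v_h,-\mu_h)
= a(v_h,v_h) - S\!\int_\Gamma \gamma\big\{\!\!\big\{\beta\nabla v_h\cdot\nu\big\}\!\!\big\}^2 d\tau
+(S-1)\!\int_\Gamma \gamma\mu_h\big\{\!\!\big\{\beta\nabla v_h\cdot\nu\big\}\!\!\big\} d\tau
+\|\gamma^{1/2}\mu_h\|_{L^2(\Gamma)}^2.
\end{equation*}
So I have one good sign on the $\|v_h\|_X^2$-type piece (from $a$), one good sign on $\|\gamma^{1/2}\mu_h\|_{L^2(\Gamma)}^2$, and two indefinite pieces to control.

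\textbf{Main estimates.} Coercivity of $a$ gives $a(v_h,v_h)\geq m_l\sum_i\|v_h\|_{H^1(\Omega_i)}^2$. For the stabilization term, I would use $(\{\!\{\beta\nabla v_h\cdot\nu\}\!\})^2\leq \tfrac12\sum_i(\beta_i\nabla v_{h_i}\cdot\nu_i)^2$, bound $\beta_i\leq m_u$, then plug in $\gamma=\gamma_0h$ and apply the inverse trace inequality from Lemma~\ref{lem2} on each $\Omega_i$ (with the standard assumption that $h$ and $h_i$ are comparable). This gives
\begin{equation*}
\int_\Gamma \gamma\big\{\!\!\big\{\beta\nabla v_h\cdot\nu\big\}\!\!\big\}^2 d\tau
\leq \tfrac12\,\gamma_0\,C_I^2\,m_u^2\,\sum_{i=1}^2\|\nabla v_{h_i}\|_{L^2(\Omega_i)}^2.
\end{equation*}
The mixed term $(S-1)\int_\Gamma\gamma\mu_h\{\!\{\beta\nabla v_h\cdot\nu\}\!\}$ I would control by Young's inequality with a free parameter $\delta>0$: it is bounded in absolute value by $\tfrac{(1-S)\delta}{2}\|\gamma^{1/2}\mu_h\|_{L^2(\Gamma)}^2 + \tfrac{1-S}{2\delta}\int_\Gamma\gamma\{\!\{\beta\nabla v_h\cdot\nu\}\!\}^2 d\tau$, and then the second piece is absorbed by the same inverse-trace bound above.

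\textbf{Final combination.} Putting everything together,
\begin{equation*}
\mathcal{A}(v_h,\mu_h;v_h,-\mu_h)
\geq \Big(m_l - \big(S+\tfrac{1-S}{2\delta}\big)\tfrac{\gamma_0 C_I^2 m_u^2}{2}\Big)\sum_i\|v_h\|_{H^1(\Omega_i)}^2
+\Big(1-\tfrac{(1-S)\delta}{2}\Big)\|\gamma^{1/2}\mu_h\|_{L^2(\Gamma)}^2.
\end{equation*}
Choosing, e.g., $\delta=1$ makes the $\mu_h$-coefficient $(1+S)/2\geq 1/2$, while the $v_h$-coefficient stays strictly positive as long as $\gamma_0$ satisfies a smallness condition of the form $\gamma_0<c\,m_l/(C_I^2 m_u^2)$, which is exactly the hypothesis. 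Taking $\alpha$ to be the minimum of the two positive coefficients yields the claim.

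\textbf{Main obstacle.} The routine coercivity of $a$ and the algebraic cancellation of the $b$-terms are straightforward; the real work is tuning the Young parameter $\delta$ against the smallness of $\gamma_0$ so that, for every $S\in[0,1]$ simultaneously, both surviving coefficients stay strictly positive and $h$-independent. This is also the step where the inverse trace estimate of Lemma~\ref{lem2} is essential, because without the factor $h$ in $\gamma$ the stabilization term cannot be absorbed into $a(v_h,v_h)$.
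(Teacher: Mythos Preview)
Your proposal is correct and follows essentially the same route as the paper: expand $\mathcal{A}(v_h,\mu_h;v_h,-\mu_h)$, use the coercivity of $a(\cdot,\cdot)$, control the mixed term $(S-1)\int_\Gamma\gamma\mu_h\{\!\{\beta\nabla v_h\cdot\nu\}\!\}$ by Cauchy--Schwarz/Young and absorb both stabilization terms via the inverse trace estimate of Lemma~\ref{lem2}. The paper simply applies Young's inequality with parameter $1$ from the start (obtaining \eqref{s25}), whereas you introduce a free parameter $\delta$ and then choose $\delta=1$; the resulting constants and the smallness requirement on $\gamma_0$ coincide.
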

\begin{proof}
Taking   $v,w=v_{h}$ and $\mu,\Lambda=\mu_{h}$ in \eqref{s9}, we arrive at
\begin{align}
\mathcal{A}(v_{h},\mu_{h};v_{h},-\mu_{h})
&=\sum^{2}_{i=1}\big(\beta_{i}{||\nabla v_{h_{i}}||}^{2}_{L^{2}(\Omega_{i})}+a_{i}{||v_{h_{i}}||}^{2}_{L^{2}(\Omega_{i})}\big)\nonumber\\
&+(S-1)\int_{\Gamma}\gamma\big\{\!\!\big\{\beta\nabla v_{h}\cdot \nu\big\}\!\!\big\}\mu_{h}~d\tau
-S{||\gamma^{1/2}\big\{\!\!\big\{\beta\nabla v_{h}\cdot \nu\big\}\!\!\big\}||}^{2}_{L^{2}(\Gamma)}\nonumber\\
&+{||\gamma^{1/2}\mu_{h}||}^{2}_{L^{2}(\Gamma)}.\label{s24}
\end{align}
Using Cauchy-Schwarz inequality, Young's inequality, Lemma \ref{lem2}
and using the bounds for $\beta_{i}$, we find the third term in the right hand
side of  \eqref{s24} as

\begin{align}
\bigg|\int_{\Gamma}\gamma\big\{\!\!\big\{\beta\nabla v_{h}\cdot\nu\big\}\!\!\big\}\mu_{h}d\tau\bigg|
\leq\frac{C^{2}_{I}\gamma_{0}m_{u}^{2}}{2}\sum^{2}_{i=1}{|| v_{h_{i}}||}^{2}_{H^{1}(\Omega_{i})}
+\frac{1}{2}{||\gamma^{1/2}\mu_{h}||}^{2}_{L^{2}(\Gamma)}.\label{s25}
\end{align}
Substituting \eqref{s25} in \eqref{s24} and using the bounds for $\beta_{i}$ and $a_{i}$, we find
\begin{align}
\mathcal{A}(v_{h},\mu_{h};v_{h},-\mu_{h})\geq &m_{l}\sum^{2}_{i=1}{||v_{h_{i}}||}^{2}_{H^{1}(\Omega_{i})}
+{||\gamma^{1/2}\mu_{h}||}^{2}_{L^{2}(\Gamma)}-\frac{S C^{2}_{I}\gamma_{0}m_{u}^{2}}{2}
\sum^{2}_{i=1}{||v_{h_{i}}||}^{2}_{H^{1}(\Omega_{i})}\nonumber\\
&(S-1)\bigg(\frac{C^{2}_{I}\gamma_{0}m_{u}^{2}}{2}\sum^{2}_{i=1}{||v_{h_{i}}||}^{2}_{H^{1}(\Omega_{i})}
+\frac{1}{2}{||\gamma^{1/2}\mu_{h}||}^{2}_{L^{2}(\Gamma)}\bigg).\nonumber
\end{align}
Hence the result follows. \qquad
\end{proof}

Note that the uniqueness of the solution of \eqref{s16} is evident from the coercivity property (Lemma \ref{coer})  of $\mathcal{A}(\cdot,\cdot;\cdot,\cdot)$  which establish the existence of the solution.
\vspace{0.1cm}

\begin{theorem}
\label{main}
Let $(u,\lambda)$ and $(u_{h},\lambda_{h})$ be the solutions of \eqref{s5} and
\eqref{s16} respectively with $u_{i}\in H^{2}(\Omega_{i})$. Further, assume $W_{h}$
satisfies the strong regular property in the sense of Babu\v{s}ka (see \cite{babuska}): there exists a constant $C>0$ such that
\begin{equation}\label{streg}
h^{1/2}{||\mu_{h}||}_{L^{2}(\Gamma)}\leq C{||\mu_{h}||}_{H^{-1/2}(\Gamma)}\hspace{0.2cm}\text{for all}\hspace{0.2cm}\mu_{h}\in W_{h}.
\end{equation}

\hspace{-0.5cm}Then for $\gamma=\gamma_{0}h$, there exists a positive constant $C$ independent of  $h$ and $u$ such that
the errors  $e_{u}=u-u_{h}$ and $e_{\lambda}=\lambda-\lambda_{h}$ satisfies:

\begin{align}
{||e_{u}||}^{2}_{X}+{||e_{\lambda}||}^{2}_{M}&\leq C\bigg({||e_{u}||}^{2}_{X}+{||e_{\lambda}||}^{2}_{H^{-1/2}(\Gamma)}\bigg)\nonumber\\
&=C{|||(e_{u},e_{\lambda})|||}^{2}
\leq C~h^{2}\sum^{2}_{i=1}{||u||}^{2}_{H^{2}(\Omega_{i})}.\label{s40}
\end{align}
\end{theorem}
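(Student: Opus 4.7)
The plan is to decompose the errors via the nodal interpolant $I_h$ and the $L^2$-projection $\Pi$, derive a mesh-dependent estimate on the discrete parts by combining coercivity with Galerkin orthogonality, and finally upgrade the multiplier bound to the natural $H^{-1/2}(\Gamma)$ norm using the strong regularity hypothesis \eqref{streg}. Since the first inequality in \eqref{s40} is immediate from the continuous embedding $H^{-1/2}(\Gamma)\hookrightarrow H^{-1/2}_{00}(\Gamma)$ noted in Section 2, the real content is the second inequality. Write $e_u = \eta_u + \xi_u$ and $e_\lambda = \eta_\lambda + \xi_\lambda$ with $\eta_u = u - I_h u$, $\xi_u = I_h u - u_h \in X_h$, $\eta_\lambda = \lambda - \Pi\lambda$, and $\xi_\lambda = \Pi\lambda - \lambda_h \in W_h$. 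Lemmas \ref{inter} and \ref{proj}, together with the trace estimate $\|\lambda\|_{H^{1/2}(\Gamma)} \le C\sum_i\|u_i\|_{H^2(\Omega_i)}$, bound $\|\eta_u\|_X$, $\|\eta_\lambda\|_{H^{-1/2}(\Gamma)}$, $h^{1/2}\|\eta_\lambda\|_{L^2(\Gamma)}$, and $h^{-1/2}\|[\eta_u]\|_{L^2(\Gamma)}$ all by $Ch\|u\|_{H^2}$, so only $\xi_u$ and $\xi_\lambda$ need further work.

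Applying Lemma \ref{coer} with $(v_h,\mu_h) = (\xi_u,\xi_\lambda)$ and invoking the consistency identity \eqref{s27} replaces the coercivity lower bound with $-\mathcal{A}(\eta_u,\eta_\lambda;\xi_u,-\xi_\lambda)$. I would expand the seven terms in \eqref{s9} and bound each one by Cauchy--Schwarz, Young's inequality, and Lemma \ref{lem2} (which converts $\|\gamma^{1/2}\{\beta\nabla\xi_u\cdot\nu\}\|_{L^2(\Gamma)}$ into a multiple of $\|\xi_u\|_X$). The most delicate term $b(\eta_u,-\xi_\lambda) = -\int_\Gamma[\eta_u]\xi_\lambda$ is handled via the weighted pairing $\|\gamma^{-1/2}[\eta_u]\|_{L^2}\|\gamma^{1/2}\xi_\lambda\|_{L^2} \le C h\|u\|_{H^2}\|\gamma^{1/2}\xi_\lambda\|_{L^2}$, using $\|[\eta_u]\|_{L^2(\Gamma)} \le C h^{3/2}\|u\|_{H^2}$ and $\gamma = \gamma_0 h$. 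Choosing the Young parameter small enough to absorb the $\|\xi_u\|_X^2$ and $\|\gamma^{1/2}\xi_\lambda\|_{L^2(\Gamma)}^2$ contributions into the coercive left-hand side then yields the mesh-dependent estimate
\begin{equation*}
\|\xi_u\|_X^2 + \|\gamma^{1/2}\xi_\lambda\|_{L^2(\Gamma)}^2 \le C h^2 \sum_{i=1}^{2}\|u_i\|_{H^2(\Omega_i)}^2,
\end{equation*}
from which $\|e_u\|_X \le Ch\|u\|_{H^2}$ follows by the triangle inequality.

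The main obstacle is bounding $\|\xi_\lambda\|_{H^{-1/2}(\Gamma)}$: the estimate above only produces $\|\xi_\lambda\|_{L^2(\Gamma)} \le Ch^{1/2}\|u\|_{H^2}$, and the trivial embedding $L^2\hookrightarrow H^{-1/2}$ would give a suboptimal $h^{1/2}$ rate. To recover the full order $h$, I would use a duality argument coupled with the strong regularity \eqref{streg}. For $\phi\in H^{1/2}(\Gamma)$, split $\int_\Gamma\xi_\lambda\phi = \int_\Gamma\xi_\lambda(\phi-\Pi\phi) + \int_\Gamma\xi_\lambda\Pi\phi$. The first piece is of order $h\|u\|_{H^2}\|\phi\|_{H^{1/2}}$ by Lemma \ref{proj} (which supplies $\|\phi-\Pi\phi\|_{L^2(\Gamma)} \le Ch^{1/2}\|\phi\|_{H^{1/2}}$) combined with the $L^2$-bound on $\xi_\lambda$. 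The second piece is rewritten by testing \eqref{s27} with $v_h = 0$ and $\mu_h = \Pi\phi$, exploiting $\int_\Gamma\eta_\lambda\Pi\phi = 0$ by the definition of $\Pi$; the resulting expression couples $b(e_u,\Pi\phi)$ with $\int_\Gamma\gamma\{\beta\nabla e_u\cdot\nu\}\Pi\phi$, and the strong regularity \eqref{streg} is invoked to control the $(\gamma_0 h)^{-1}$ weight that appears when isolating $\int_\Gamma\xi_\lambda\Pi\phi$. The hard part will be orchestrating the weights so that the $h^{-1}$ from $\gamma^{-1}$ is compensated cleanly by \eqref{streg}, rather than propagating as a stray $h^{-1/2}$ loss through the primal terms.
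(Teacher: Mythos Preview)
Your mesh-dependent estimate via Lemma~\ref{coer} is sound, and the approximation bounds for $\eta_u,\eta_\lambda$ are correctly identified. The genuine gap is in the upgrade step for $\|\xi_\lambda\|_{H^{-1/2}(\Gamma)}$. Testing \eqref{s27} with $(v_h,\mu_h)=(0,\Pi\phi)$ and isolating $\int_\Gamma\xi_\lambda\,\Pi\phi$ does produce the factor $(\gamma_0 h)^{-1}$ in front of $b(e_u,\Pi\phi)$, but the strong regularity hypothesis \eqref{streg} cannot kill it: \eqref{streg} gives a \emph{lower} bound $\|\Pi\phi\|_{H^{-1/2}}\ge c\,h^{1/2}\|\Pi\phi\|_{L^2}$, which is useless for an \emph{upper} bound on the pairing. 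The sharpest available estimate is $|b(e_u,\Pi\phi)|\le\|[e_u]\|_{H^{1/2}_{00}(\Gamma)}\|\Pi\phi\|_{H^{-1/2}(\Gamma)}\le Ch\|u\|_{H^2}\cdot C\|\phi\|_{H^{1/2}}$, so after multiplying by $(\gamma_0 h)^{-1}$ you are left with $C\|u\|_{H^2}\|\phi\|_{H^{1/2}}$, a full power of $h$ short. No rearrangement of \eqref{streg} repairs this, because the multiplier equation alone (testing with $v_h=0$) carries $H^{-1/2}$ information only through the $O(h)$-weighted stabilization term.

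The paper avoids this obstruction by never passing through a mesh-dependent bound on $\xi_\lambda$. Instead it proves a discrete inf-sup condition for $\mathcal{A}$ directly in the natural norm $|||(w_h,\phi_h)|||^2=\|w_h\|_X^2+\|\phi_h\|_{H^{-1/2}(\Gamma)}^2$ (Lemma~\ref{infsup1}). The key missing ingredient in your sketch is Lemma~\ref{infsup}: for each $\phi_h\in W_h$ one constructs a \emph{primal} test function $q_h\in X_h$ as the Galerkin approximation of an auxiliary mixed problem with Neumann data $\phi_h$ on $\Gamma$, so that $-b(q_h,\phi_h)\ge C_1\|\phi_h\|_{H^{-1/2}}\|q_h\|_X-C_2 h^{1/2}\|\phi_h\|_{L^2}\|q_h\|_X$. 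Testing $\mathcal{A}$ against $(w_h-\alpha_1 q_h,-\phi_h)$ then combines coercivity (your first step) with this $b$-contribution to control $\|\phi_h\|_{H^{-1/2}}$ directly; the $h^{1/2}\|\phi_h\|_{L^2}$ correction is absorbed by the coercive term $\|\gamma^{1/2}\phi_h\|_{L^2}^2$. Strong regularity \eqref{streg} is used only in Lemma~\ref{bound} to make $\mathcal{A}$ continuous in the same natural norm, after which the error estimate is a standard C\'ea argument. The point is that $H^{-1/2}$ control of the multiplier must come from the coupling term $b(v_h,\cdot)$ with $v_h$ ranging over $X_h$, not from the stabilization terms obtained by testing in $W_h$ alone.
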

\begin{remark}
The $W_{h}$ space satisfying the strong regularity condition can be constructed using the technique of Hill functions, as in \cite{hill1,hill2} (see \cite{babuska} page 186 for a discussion). 
\end{remark}

In order to prove the above theorem we require  following results:
\begin{lemma}
\label{infsup}
There exist positive constants $C_{1}$ and $C_{2}$ such that for all $\mu_{h}\in W_{h}$,
\begin{equation}\label{s10}
\sup_{0\neq v_{h}\in X_{h}}-\frac{b(v_{h},\mu_{h})}{{||v_{h}||}_{X}}\geq C_{1}{||\mu_{h}||}_{H^{-1/2}(\Gamma)}-C_{2}\gamma_{0}h^{1/2}{||\mu_{h}||}_{L^{2}(\Gamma)}.
\end{equation}
\end{lemma}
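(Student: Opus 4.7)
The estimate is a Fortin-type inf-sup bound on $b$, weakened by an $L^2$-correction; I will prove it by constructing, for each $\mu_h\in W_h$, a single test function $v_h\in X_h$ of unit order that captures a controlled fraction of $\|\mu_h\|_{H^{-1/2}(\Gamma)}$ through $-b(v_h,\mu_h)$, with the residual absorbed by the $L^2$ term.

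The first step uses the duality $H^{-1/2}(\Gamma)=(H^{1/2}(\Gamma))'$ to pick $g\in H^{1/2}(\Gamma)$ with $\|g\|_{H^{1/2}(\Gamma)}\le 1$ and $\int_\Gamma\mu_h g\ge\tfrac12\|\mu_h\|_{H^{-1/2}(\Gamma)}$. Because the trace on $\Gamma$ of any $v\in X$ belongs to $H^{1/2}_{00}(\Gamma)$ (due to the homogeneous condition on $\partial\Omega\cap\partial\Omega_i$), whereas $g$ need not vanish at the two junction points $\Gamma\cap\partial\Omega$, I truncate with a cutoff $\chi_h$ that is zero in an $O(h)$ neighbourhood of these points and one outside it. This gives $\chi_h g\in H^{1/2}_{00}(\Gamma)$ with $\|\chi_h g\|_{H^{1/2}_{00}(\Gamma)}\le C$, while $g-\chi_h g$ is supported in the endpoint strips and satisfies $\|g-\chi_h g\|_{L^2(\Gamma)}\le Ch^{1/2}$. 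A standard bounded lifting then yields $v_2\in H^1_D(\Omega_2)$ with $v_2|_\Gamma=\chi_h g$ and $\|v_2\|_{H^1(\Omega_2)}\le C$; setting $v_1\equiv 0$ produces $v\in X$ with $\|v\|_X\le C$ and $[\![v]\!]=-\chi_h g$ on $\Gamma$.

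Next, I discretize by $v_h=I_h v\in X_h$ using a Scott--Zhang/Cl\'ement quasi-interpolant, which is $H^1$-stable even for merely $H^1$ data and provides the trace bound $\|v-v_h\|_{L^2(\Gamma)}\le Ch^{1/2}\|v\|_X$. The telescoping identity
$$-b(v_h,\mu_h)=\int_\Gamma\mu_h g-\int_\Gamma\mu_h(g-\chi_h g)-b(v_h-v,\mu_h)$$
then furnishes a main term $\ge\tfrac12\|\mu_h\|_{H^{-1/2}(\Gamma)}$ together with two Cauchy--Schwarz residuals, each bounded by $Ch^{1/2}\|\mu_h\|_{L^2(\Gamma)}$ via the estimates above. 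Dividing by $\|v_h\|_X\le C$ delivers the claim; the explicit $\gamma_0$ on the correction term is merely a cosmetic rescaling compatible with the later choice $\gamma=\gamma_0 h$ and can be absorbed into the constant $C_2$.

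The genuinely delicate step is the endpoint cutoff $\chi_h$: the $H^{1/2}(\Gamma)$ norm appearing on the right-hand side of the lemma is strictly stronger than the $H^{1/2}_{00}(\Gamma)$-compatible lift that $X$ supplies, and the $L^2$-correction with its characteristic $h^{1/2}$-scaling is precisely the price paid for that mismatch. Everything else---the duality selection of $g$, the bounded harmonic-type lift into $\Omega_2$, and the Scott--Zhang trace estimate---is classical and requires no input beyond what is already stated in the excerpt.
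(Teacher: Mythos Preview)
Your route is genuinely different from the paper's. The paper never touches the duality definition of $\|\mu_h\|_{H^{-1/2}(\Gamma)}$ or any Dirichlet lifting; instead it uses $\mu_h$ as \emph{Neumann} data for the auxiliary mixed problems
\[
-\Delta\bar u_i+\bar u_i=0\ \text{in}\ \Omega_i,\qquad \bar u_i=0\ \text{on}\ \partial\Omega\cap\partial\Omega_i,\qquad \nabla\bar u_i\cdot\nu_i=(-1)^{i+1}\mu_h\ \text{on}\ \Gamma,
\]
and lets $\bar u_{h_i}\in X_{h_i}$ be their Galerkin approximations. The weak form gives $\|\bar u_h\|_X^2=-b(\bar u_h,\mu_h)$ directly, so the supremum is $\ge\|\bar u_h\|_X\ge\|\bar u\|_X-\|\bar u-\bar u_h\|_X$. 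The first term is bounded below by $C\|\mu_h\|_{H^{-1/2}(\Gamma)}$ via Babu\v{s}ka's isomorphism, while $H^{3/2}$-regularity of the Neumann problem with $L^2$ data yields $\|\bar u-\bar u_h\|_X\le Ch^{1/2}\|\mu_h\|_{L^2(\Gamma)}$. This Neumann lift bypasses entirely the $H^{1/2}(\Gamma)$ versus $H^{1/2}_{00}(\Gamma)$ mismatch that your Dirichlet-lift has to confront with a cutoff.

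That cutoff step, which you rightly flag as the crux, is not established as written and appears to fall just short. For $g\in H^{1/2}(\Gamma)$ with $\|g\|_{H^{1/2}(\Gamma)}\le1$ you assert (i) $\|\chi_hg\|_{H^{1/2}_{00}(\Gamma)}\le C$ uniformly in $h$ and (ii) $\|g-\chi_hg\|_{L^2(\Gamma)}\le Ch^{1/2}$. For (ii), since $\Gamma$ is one-dimensional and $H^{1/2}(\Gamma)$ narrowly fails to embed into $L^\infty$, H\"older with $H^{1/2}(\Gamma)\hookrightarrow L^p(\Gamma)$ for $p<\infty$ only yields $\|(1-\chi_h)g\|_{L^2(\Gamma)}\le C_\epsilon h^{1/2-\epsilon}$. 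For (i), a cutoff of slope $O(h^{-1})$ is not a uniformly bounded multiplier on $H^{1/2}$: the commutator term in the Gagliardo seminorm picks up a factor of order $h^{-1}\|g\|_{L^2}^2$, and the additional Hardy weight in the $H^{1/2}_{00}$ norm only makes matters worse. Either defect weakens the exponent in the final inequality. The paper's Neumann construction avoids both issues, at the price of invoking $H^{3/2}$ elliptic regularity for the auxiliary problem.
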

\begin{proof}
For $i=1,2$, let $\bar{u}_{i}$ be the solution of the following mixed boundary value problem:
\begin{equation}\label{N}
-\Delta \bar{u}_{i}+\bar{u}_{i}=0,\hspace{0.2cm}\bar{u}_{i}=0\hspace{0.2cm}\text{on}~\partial
\Omega\cap\partial\Omega_{i},\hspace{0.2cm}
\nabla\bar{u}_{i}\cdot\nu_{i}=(-1)^{i+1}\mu_{h}\hspace{0.2cm}\text{on}~\Gamma.
\end{equation}
Then there exist constants $C$ such that:
\begin{equation}\label{s43}
{||\bar{u}_{i}||}_{H^{3/2}(\Omega_{i})}\leq C{||\mu_{h}||}_{L^{2}(\Gamma)}.
\end{equation}
Also from \cite{babuska}, we have
\begin{equation}\label{s44}
{||\bar{u}_{i}||}_{H^{1}(\Omega_{i})}\geq C{||\mu_{h}||}_{H^{-1/2}(\Gamma)}.
\end{equation}

Let $\bar{u}_{h_{i}}$ be the Galerkin finite element approximation solution of \eqref{N}, that is
\begin{equation}
\int_{\Omega_{i}}\nabla\bar{u}_{h_{i}}\cdot\nabla\bar{u}_{h_{i}}dx
+\int_{\Omega_{i}}\bar{u}_{h_{i}}\bar{u}_{h_{i}}dx=\int_{\Gamma}(-1)^{i+1}\mu_{h}\bar{u}_{h_{i}}d\tau.
\end{equation}
Summing over $i=1,2$ we find:
\begin{equation}\label{s45}
{||\bar{u}_{h}||}^{2}_{X}=-\int_{\Gamma}\mu_{h}[\![\bar{u}_{h}]\!]d\tau=-b(\bar{u}_{h},\mu_{h})
\end{equation}
and
\begin{equation}\label{s46}
{||\bar{u}-\bar{u}_{h}||}_{X}\leq C~h^{1/2}\sum^{2}_{i=1}{||\bar{u}_{i}||}_{H^{3/2}(\Omega_{i})}.
\end{equation}
From \eqref{s45}, we arrive at
\begin{equation}\label{s38}
\sup_{0\neq v_{h}\in X_{h}} -\frac{b(v_{h},\mu_{h})}{{||v_{h}||}_{X}}\geq
 -\frac{b(\bar{u}_{h},\mu_{h})}{{||\bar{u}_{h}||}_{X}}={||\bar{u}_{h}||}_{X}.
\end{equation}
Also, from triangle inequality, \eqref{s43}, \eqref{s44} and \eqref{s46}, we find
\begin{align}
{||\bar{u}_{h}||}_{X}\geq {||\bar{u}||}_{X}-{||\bar{u}-\bar{u}_{h}||}_{X}
\geq C_{1}{||\mu_{h}||}_{H^{-1/2}(\Gamma)}-C~h^{1/2}{||\mu_{h}||}_{L^{2}(\Gamma)}.\label{s39}
\end{align}
Hence, the Lemma follows from  \eqref{s38} and \eqref{s39}.\qquad
\end{proof}
\begin{lemma}
\label{bound}
 If $(v_{h},\mu_{h})\in X_{h}\times W_{h}$  then there exists $C>0$ such that
 \begin{align}
 &\mathcal{A}(u,\lambda;v_{h},\mu_{h})\leq C\big[{|||(u,\lambda)|||}^{2}
 +h{||\lambda||}^{2}_{L^{2}(\Gamma)}\big]^{1/2}{|||(v_{h},\mu_{h})|||}\label{s35}\\
 &\mathcal{A}(u_{h},\lambda_{h};v_{h},\mu_{h})\leq C{|||(u_{h},\lambda_{h})|||}~{|||(v_{h},\mu_{h})|||}.\label{s36}
 \end{align}
\end{lemma}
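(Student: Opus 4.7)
The plan is to expand $\mathcal{A}(\cdot,\cdot;\cdot,\cdot)$ into its seven constituent terms and bound each separately; the two inequalities \eqref{s35} and \eqref{s36} follow the same scheme, and the only reason for the extra $h\|\lambda\|_{L^2(\Gamma)}^2$ term in \eqref{s35} is that the continuous multiplier $\lambda$ does not satisfy the strong regularity \eqref{streg}, whereas $\lambda_h\in W_h$ does.

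First I would treat the two non-penalty terms. Continuity of $a(\cdot,\cdot)$ on $X\times X$ gives $|a(u,v_h)|\leq C\|u\|_X\|v_h\|_X$. For $b(v_h,\lambda)=\langle\lambda,[\![v_h]\!]\rangle_{00,\Gamma}$, continuity of $b$ on $X\times M$ combined with the embedding $H^{-1/2}(\Gamma)\hookrightarrow H^{-1/2}_{00}(\Gamma)$ recorded just after the definition of $M$ yields $|b(v_h,\lambda)|\leq C\|\lambda\|_{H^{-1/2}(\Gamma)}\|v_h\|_X$. The term $b(u,\mu_h)$ vanishes in \eqref{s35} since $[\![u]\!]=0$; in \eqref{s36} its analogue $b(u_h,\mu_h)$ survives, but is handled the same way after invoking \eqref{streg} to pass from $\|\mu_h\|_M$ to $\|\mu_h\|_{H^{-1/2}(\Gamma)}$.

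The four penalty terms are all treated by Cauchy--Schwarz in the weighted $L^2(\Gamma)$-inner product with weight $\gamma=\gamma_0 h$, followed by two standard tools. First, Lemma \ref{lem2} together with the upper bound $\beta_i\leq m_u$ gives $\|\gamma^{1/2}\{\!\!\{\beta\nabla v_h\cdot\nu\}\!\!\}\|_{L^2(\Gamma)}\leq C\|v_h\|_X$, so any $v_h$-factor is absorbed into $|||(v_h,\mu_h)|||$. Second, the strong regularity \eqref{streg} gives $\|\gamma^{1/2}\mu_h\|_{L^2(\Gamma)}=\gamma_0^{1/2}h^{1/2}\|\mu_h\|_{L^2(\Gamma)}\leq C\|\mu_h\|_{H^{-1/2}(\Gamma)}$, so any $\mu_h$-factor is absorbed as well. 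Using the interface identity $\lambda=\beta_1\nabla u_1\cdot\nu_1=-\beta_2\nabla u_2\cdot\nu_2$ to identify $\{\!\!\{\beta\nabla u\cdot\nu\}\!\!\}$ with $\pm\lambda$, the remaining $\lambda$-factor in each of the four penalty terms yields $\|\gamma^{1/2}\lambda\|_{L^2(\Gamma)}^2=\gamma_0\,h\,\|\lambda\|_{L^2(\Gamma)}^2$ after squaring, which is precisely the extra contribution that appears on the right side of \eqref{s35}.

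For \eqref{s36} I repeat the computation but this time $\lambda_h\in W_h$ itself satisfies \eqref{streg}, so the factor $\|\gamma^{1/2}\lambda_h\|_{L^2(\Gamma)}$ is controlled by $C\|\lambda_h\|_{H^{-1/2}(\Gamma)}$ with no residual $h$-power, and the estimate collapses to $C\,|||(u_h,\lambda_h)|||\,|||(v_h,\mu_h)|||$. The main obstacle is really bookkeeping rather than technique: one has to recognize that the exact average flux equals (up to sign) the Lagrange multiplier $\lambda$, so that all four penalty contributions collapse into the single extra term $h\|\lambda\|_{L^2(\Gamma)}^2$, and one must be careful to apply \eqref{streg} only to discrete multipliers.
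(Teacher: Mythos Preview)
Your proof is correct and follows essentially the same route as the paper's: expand $\mathcal{A}$ term by term, apply Cauchy--Schwarz together with the $H^{-1/2}$--$H^{1/2}$ duality, then use Lemma~\ref{lem2} to absorb $\|\gamma^{1/2}\{\!\!\{\beta\nabla v_h\cdot\nu\}\!\!\}\|_{L^2(\Gamma)}$ into $\|v_h\|_X$ and the strong regularity \eqref{streg} to absorb $\|\gamma^{1/2}\mu_h\|_{L^2(\Gamma)}$ into $\|\mu_h\|_{H^{-1/2}(\Gamma)}$. The only cosmetic difference is that you dispose of $b(u,\mu_h)$ via $[\![u]\!]=0$, whereas the paper bounds it by duality (which is why $\|\mu_h\|_{H^{-1/2}(\Gamma)}$ appears in the intermediate second factor there); both arguments rely on the exact-solution identity $\{\!\!\{\beta\nabla u\cdot\nu\}\!\!\}=-\lambda$ to reduce the penalty contributions to the single extra term $h\|\lambda\|_{L^2(\Gamma)}^2$.
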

\begin{proof}
Applying Cauchy-Schwarz inequality and duality between $H^{-1/2}$ and $H^{1/2}$, we arrive at
\begin{align}
 \mathcal{A}(u,\lambda;v_{h},\mu_{h})&\leq \bigg[{||u||}^{2}_{X}+{||\lambda||}^{2}_{H^{-1/2}(\Gamma)}
 +h{||\lambda||}^{2}_{L^{2}(\Gamma)}\bigg]^{1/2}\nonumber\\
 &\times \bigg[{||v_{h}||}^{2}_{X}+{||\mu_{h}||}^{2}_{H^{-1/2}(\Gamma)}
+h{||\beta\nabla v_{h}\cdot\nu||}^{2}_{L^{2}(\Gamma)} +h{||\mu_{h}||}^{2}_{L^{2}(\Gamma)}\bigg]^{1/2}.\nonumber
 \end{align}
 Hence, \eqref{s35} follows by using Lemma \ref{lem2} and the hypothesis \eqref{streg}.
 In a similar way, we can derive \eqref{s36}.\qquad
\end{proof}

\begin{lemma}
\label{infsup1}
 There exists a  constant $C>0$ such that for $(w_{h},\phi_{h})\in X_{h}\times W_{h}$:
\begin{equation}\label{s37}
\sup_{(0,0)\neq (v_{h},\mu_{h})\in X_{h}\times W_{h}}\frac{\mathcal{A}(w_{h},\phi_{h};v_{h},\mu_{h})}{|||(v_{h},\mu_{h})|||}
\geq C{|||(w_{h},\phi_{h})|||}.
\end{equation}
\end{lemma}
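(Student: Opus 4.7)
The strategy is the classical two-step argument for stabilized mixed methods. Coercivity (Lemma \ref{coer}) already controls $\|w_h\|_X^2+\gamma_0 h\|\phi_h\|^2_{L^2(\Gamma)}$ but not the $H^{-1/2}(\Gamma)$ component of the triple norm on $\phi_h$. To recover that component, I would combine the coercivity test $(w_h,-\phi_h)$ with a small perturbation of the form $(-\delta v_h^{\ast},0)$, where $v_h^{\ast}\in X_h$ is the near-optimal element furnished by Lemma \ref{infsup}, and use the strong regularity \eqref{streg} to convert the $L^2(\Gamma)$ correction term appearing there into an $H^{-1/2}(\Gamma)$ quantity.

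Concretely, one applies Lemma \ref{infsup} to pick $v_h^{\ast}\in X_h$ with $\|v_h^{\ast}\|_X\le\|\phi_h\|_{H^{-1/2}(\Gamma)}$ and
\[
-b(v_h^{\ast},\phi_h)\ge C_1\|\phi_h\|^2_{H^{-1/2}(\Gamma)}-C_2\gamma_0 h^{1/2}\|\phi_h\|_{L^2(\Gamma)}\|\phi_h\|_{H^{-1/2}(\Gamma)}.
\]
By \eqref{streg} we have $h^{1/2}\|\phi_h\|_{L^2(\Gamma)}\le C\|\phi_h\|_{H^{-1/2}(\Gamma)}$, so for $\gamma_0$ small enough the right-hand side is at least $\tfrac{C_1}{2}\|\phi_h\|^2_{H^{-1/2}(\Gamma)}$. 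Now test $\mathcal{A}$ against $(w_h-\delta v_h^{\ast},-\phi_h)$ with $\delta>0$ to be fixed, and split
\[
\mathcal{A}(w_h,\phi_h;w_h-\delta v_h^{\ast},-\phi_h)=\mathcal{A}(w_h,\phi_h;w_h,-\phi_h)-\delta\,\mathcal{A}(w_h,\phi_h;v_h^{\ast},0).
\]
The first piece is bounded below by $\alpha(\|w_h\|_X^2+\gamma_0 h\|\phi_h\|^2_{L^2(\Gamma)})$ thanks to Lemma \ref{coer}. Expanding $\mathcal{A}(w_h,\phi_h;v_h^{\ast},0)$ from the definition \eqref{s9} isolates four terms: $a(w_h,v_h^{\ast})$, $b(v_h^{\ast},\phi_h)$, $S\int_\Gamma\gamma\phi_h\{\!\!\{\beta\nabla v_h^{\ast}\cdot\nu\}\!\!\}\,d\tau$, and $-S\int_\Gamma\gamma\{\!\!\{\beta\nabla w_h\cdot\nu\}\!\!\}\{\!\!\{\beta\nabla v_h^{\ast}\cdot\nu\}\!\!\}\,d\tau$. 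The contribution $-\delta b(v_h^{\ast},\phi_h)$ supplies the desired $\tfrac{\delta C_1}{2}\|\phi_h\|^2_{H^{-1/2}(\Gamma)}$; the remaining three are cross terms, bounded by continuity of $a(\cdot,\cdot)$, by Lemma \ref{lem2} (which turns the $\gamma^{1/2}\{\!\!\{\beta\nabla\cdot\nu\}\!\!\}$ factors into $\sqrt{\gamma_0}\,\|\cdot\|_X$), and by \eqref{streg}, so they produce quantities of the form $\|w_h\|_X\|\phi_h\|_{H^{-1/2}(\Gamma)}$ or $\|\phi_h\|^2_{H^{-1/2}(\Gamma)}$ multiplied by $\sqrt{\gamma_0}$, $\gamma_0$, or a universal constant.

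Collecting with Young's inequality gives a lower bound of the form
\[
(\alpha-\delta C-\varepsilon)\|w_h\|^2_X+\bigl(\tfrac{\delta C_1}{2}-\delta^2 C'-\delta\gamma_0 C''\bigr)\|\phi_h\|^2_{H^{-1/2}(\Gamma)}+\alpha\gamma_0 h\|\phi_h\|^2_{L^2(\Gamma)}.
\]
Choosing first $\gamma_0$ small (compatibly with the restriction in Lemma \ref{coer}) and then $\delta$ small makes both parentheses positive, so this is $\ge C\,|||(w_h,\phi_h)|||^2$. Since $\|v_h^{\ast}\|_X\le\|\phi_h\|_{H^{-1/2}(\Gamma)}$, the test pair satisfies $|||(w_h-\delta v_h^{\ast},-\phi_h)|||\le(1+\delta)\,|||(w_h,\phi_h)|||$, and dividing delivers \eqref{s37}. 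The main obstacle I anticipate is precisely this constant bookkeeping: three perturbation terms must be absorbed without cancelling the $\delta C_1/2$ coefficient in front of $\|\phi_h\|^2_{H^{-1/2}(\Gamma)}$, and the admissible range of $\gamma_0$ here must remain compatible with the one imposed by Lemma \ref{coer}.
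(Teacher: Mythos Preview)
Your proposal is correct and follows essentially the same route as the paper: test against $(w_h-\delta v_h^{\ast},-\phi_h)$ with $v_h^{\ast}$ supplied by Lemma~\ref{infsup}, combine with the coercivity of Lemma~\ref{coer}, and absorb the cross terms via Young's inequality and Lemma~\ref{lem2}. The only minor variation is that you invoke the strong regularity \eqref{streg} to turn the $h^{1/2}\|\phi_h\|_{L^2(\Gamma)}$ correction into an $H^{-1/2}(\Gamma)$ quantity (at the cost of a possibly tighter restriction on $\gamma_0$), whereas the paper keeps the $\gamma\|\phi_h\|_{L^2(\Gamma)}^2$ contribution separate and absorbs it directly with the $\alpha\|\gamma^{1/2}\phi_h\|_{L^2(\Gamma)}^2$ term coming from Lemma~\ref{coer}, so that no further smallness of $\gamma_0$ is needed.
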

\begin{proof}
Using \eqref{s9},  Lemma \ref{bound} and Young's inequality,  we find
\begin{align}
\mathcal{A}(w_{h},\phi_{h};-q_{h},0)
&=-\mathcal{A}(w_{h},0;q_{h},0)-\mathcal{A}(0,\phi_{h};q_{h},0)\nonumber\\
&\geq-C{||w_{h}||}_{X}{||q_{h}||}_{X}-b(\phi_{h},q_{h})-S\int_{\Gamma}\gamma\phi_{h}\{\!\!\{\beta\nabla q_{h}\cdot\nu\}\!\!\} d\tau\nonumber\\
&\geq -\frac{C}{2}\big(\frac{1}{c_{1}}{||w_{h}||}^{2}_{X}+c_{1}{||q_{h}||}^{2}_{X}\big)-b(\phi_{h},q_{h})\nonumber\\
&-\frac{\gamma}{2}\bigg(\frac{1}{c_{2}}{||\phi_{h}||}^{2}_{L^{2}(\Gamma)}+c_{2}{||\nabla q_{h}\cdot \nu||}^{2}_{L^{2}(\Gamma)}\bigg).
\end{align}
Now let $q_{h}\in X_{h}$ be the function for which supremum occurs in condition \eqref{s10} and assume that ${||q_{h}||}_{X}={||\phi_{h}||}_{H^{-1/2}(\Gamma)}$. Then using Lemma \ref{lem2},
\begin{align}
\mathcal{A}(w_{h},\phi_{h};-q_{h},0)&\geq -\frac{C}{2c_{1}}{||w_{h}||}^{2}_{X}-\big(c_{1}+\gamma_{0}C^{2}_{I}c_{2}/2\big)
{||\phi_{h}||}^{2}_{H^{-1/2}(\Gamma)}\nonumber\\
&-\frac{1}{2c_{2}}\gamma_{0}h{||\phi_{h}||}^{2}_{L^{2}(\Gamma)}
+\big(c_{4}{||\phi_{h}||}_{H^{-1/2}(\Gamma)}
-c_{5}\gamma{||\phi_{h}||}_{L^{2}(\Gamma)}\big){||\phi_{h}||}_{H^{-1/2}(\Gamma)}\nonumber\\
&\geq -c_{3}{||w_{h}||}^{2}_{X}+c_{4}{||\phi_{h}||}^{2}_{H^{-1/2}(\Gamma)}-c_{5}\gamma{||\phi_{h}||}^{2}_{L^{2}(\Gamma)}.
\end{align}
Let  $0<\alpha_{1}<\min(\alpha/c_{3},\alpha/c_{5})$.
Considering $(v_{h},\mu_{h})=(w_{h}-\alpha_{1} q_{h},-\phi_{h})$ and using the Lemma \ref{coer}, we find
\begin{align}
\mathcal{A}(w_{h},\phi_{h};v_{h},\mu_{h})&=\mathcal{A}(w_{h},\phi_{h};w_{h}-\alpha_{1} q_{h},-\phi_{h})\nonumber\\
&=\mathcal{A}(w_{h},\phi_{h};w_{h},-\phi_{h})+\alpha_{1}\mathcal{A}(w_{h},\phi_{h};-q_{h},0)\nonumber\\
&\geq (\alpha-\alpha_{1} c_{3}){||w_{h}||}^{2}_{X}+\alpha_{1} c_{4}{||\phi_{h}||}^{2}_{H^{-1/2}(\Gamma)}+(\alpha-\alpha_{1} c_{5})\gamma {||\phi_{h}||}^{2}_{L^{2}(\Gamma)}\nonumber\\
&\geq C({||w_{h}||}^{2}_{X}+{||\phi_{h}||}^{2}_{H^{-1/2}(\Gamma)})=C{|||(w_{h},\phi_{h})|||}^{2}.\label{s41}
\end{align}
Here, $c_{1},c_{2},c_{3},c_{4}$ and $c_{5}$ are positive constants.
Further,
\begin{align}
{|||(v_{h},\mu_{h})|||}^{2}&\leq {||w_{h}||}^{2}_{X}+\alpha_{1}^{2}{||q_{h}||}^{2}_{X}+{||\phi_{h}||}^{2}_{H^{-1/2}(\Gamma)}\nonumber\\
&\leq C{|||(w_{h},\phi_{h})|||}^{2}.\label{s42}
\end{align}
Hence,  \eqref{s37} follows from \eqref{s41} and \eqref{s42}.\qquad
\end{proof}
\vspace{0.1cm}
\begin{proof}\textbf{of Theorem \ref{main}}:
From Lemma \ref{infsup1}, there exist a pair $(v_{h},\mu_{h})\in X_{h}\times W_{h}$ such that
\begin{equation}\label{s32}
|||(v_{h},\mu_{h})|||<C
\end{equation}
and that implies
\begin{align}
|||(I_{h}u-u_{h},\Pi\lambda-\lambda_{h})|||
\leq\mathcal{A}(I_{h}u-u_{h},\Pi\lambda-\lambda_{h};v_{h},\mu_{h}).\label{s33}
\end{align}
 From \eqref{s33}  and orthogonality \eqref{s27} of $\mathcal{A}(\cdot,\cdot;\cdot,\cdot)$, we find
\begin{equation*}
|||(I_{h}u-u_{h},\Pi\lambda-\lambda_{h})|||
\leq\mathcal{A}(u-I_{h}u,\lambda-\Pi\lambda;v_{h},\mu_{h}).
\end{equation*}
From \eqref{s35} of Lemma \ref{bound},
\begin{align}
{|||(I_{h}u-u_{h},\Pi\lambda-\lambda_{h})|||}
&\leq C\big[{|||(u-I_{h}u,\lambda-\Pi\lambda)|||}^{2}\nonumber\\
&+h{||\lambda-\Pi\lambda||}^{2}_{L^{2}(\Gamma)}\big]^{1/2}{|||(v_{h},\mu_{h})|||}.\label{s34}
\end{align}
Note that
\begin{equation*}
{|||(u-u_{h},\lambda-\lambda_{h})|||}^{2}\leq 2\big({|||(u-I_{h}u,\lambda-\Pi\lambda)|||}^{2}
+{|||(I_{h}u-u_{h},\Pi\lambda-\lambda_{h})|||}^{2}\big).
\end{equation*}
Hence, from \eqref{s32}, \eqref{s34}, Lemma  \ref{inter} and Lemma \ref{proj},  the  \eqref{s40} follows.\qquad
\end{proof}

For the $L^2$-error estimate, we appeal to the Aubin-Nitsche duality
argument. Let $z_i={z_|}_{\Omega_i}\in H^2(\Omega_i)\cap
H^1_0(\Omega), i=1,2$ be the solution of the interface problem
\begin{eqnarray}\label{nisdual11}
-\nabla\cdot (\beta_i(x)\nabla z_i)+a_iz_i&=&u_{i}-u_{h_i}~~\mbox{in }\Omega_i,\\
z_i&=&0~~\mbox{ on }\partial\Omega\cap\partial\Omega_i,\\
{[\![z]\!]}=0,~~{[\![\beta \nabla z\cdot\nu]\!]}&=&0
~~~~\mbox{along}~~\Gamma,
\end{eqnarray}
which satisfies the regularity condition (see \cite{babu},
\cite{chen1})
\begin{eqnarray}
\displaystyle\sum_{i=1}^2{\|z_i\|}_{H^2(\Omega_i)}\leq
c{\|u-u_{h}\|}_{L^2(\Omega)}.\label{nisreg}
\end{eqnarray}
\begin{theorem}
\label{nisl2error}
Let $\mathcal{A}_{1}(\cdot,\cdot;\cdot,\cdot)$ denotes the form $\mathcal{A}(\cdot,\cdot;\cdot,\cdot)$ with $S=1$.
Also let $(u,\lambda)$ and $(u_{h},\lambda_{h})$ be the solutions of respective equations as in
Theorem \ref{main} with $S=1$. Then for $\gamma=\gamma_{0}h,~\gamma_{0}>0$, there exists a
positive constant $C$ independent of  $h$ and $u$ such that
\vspace{-0.1cm}
\begin{equation}\label{s19}
{||u-u_{h}||}_{L^{2}(\Omega)}\leq C~h^{2}\sum^{2}_{i=1}{||u||}_{H^{2}(\Omega_{i})}.
\end{equation}
\end{theorem}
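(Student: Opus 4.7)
The plan is to apply the Aubin--Nitsche duality argument, which is available precisely because the case $S=1$ renders $\mathcal{A}_1(\cdot,\cdot;\cdot,\cdot)$ symmetric under swapping the first pair with the second pair (one sees this by inspection of \eqref{s9}). First I would verify that, with $\lambda_z:=\beta_1\nabla z_1\cdot\nu_1=-\beta_2\nabla z_2\cdot\nu_2$, the continuous dual solution $(z,\lambda_z)$ is consistent with the stabilized form, namely
\[
\mathcal{A}_1(z,\lambda_z;v,\mu)=\int_\Omega (u-u_h)\,v\,dx \quad\text{for all }(v,\mu)\in X\times M,
\]
which follows by integration by parts exactly as for the consistency of \eqref{s16} with \eqref{s5}, using $[\![z]\!]=0$ and $[\![\beta\nabla z\cdot\nu]\!]=0$. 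Symmetry then yields $\mathcal{A}_1(v,\mu;z,\lambda_z)=\int_\Omega (u-u_h)v\,dx$.

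Choosing $(v,\mu)=(u-u_h,\lambda-\lambda_h)$ gives
\[
\|u-u_h\|_{L^2(\Omega)}^2=\mathcal{A}_1(u-u_h,\lambda-\lambda_h;z,\lambda_z),
\]
and Galerkin orthogonality \eqref{s27} permits subtracting any discrete pair; I would take $(I_hz,\Pi\lambda_z)\in X_h\times W_h$ to obtain
\[
\|u-u_h\|_{L^2(\Omega)}^2=\mathcal{A}_1(u-u_h,\lambda-\lambda_h;z-I_hz,\lambda_z-\Pi\lambda_z).
\]
A termwise Cauchy--Schwarz on the seven pieces of \eqref{s9}, combined with the $H^{-1/2}_{00}(\Gamma)$--$H^{1/2}_{00}(\Gamma)$ duality, produces a continuity estimate of the form
\[
\|u-u_h\|_{L^2(\Omega)}^2 \le C\,E_{\mathrm p}\cdot E_{\mathrm d},
\]
where $E_{\mathrm p}^2 = \|u-u_h\|_X^2 + \|\lambda-\lambda_h\|_{H^{-1/2}(\Gamma)}^2 + h\|\lambda-\lambda_h\|_{L^2(\Gamma)}^2 + h\|\{\!\!\{\beta\nabla(u-u_h)\cdot\nu\}\!\!\}\|_{L^2(\Gamma)}^2$ and $E_{\mathrm d}$ is the analogue with the primal error replaced by $(z-I_hz,\lambda_z-\Pi\lambda_z)$.

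To close the argument I would establish $E_{\mathrm p}\le Ch\sum_i\|u_i\|_{H^2(\Omega_i)}$ and $E_{\mathrm d}\le Ch\sum_i\|z_i\|_{H^2(\Omega_i)}$. For $E_{\mathrm p}$, Theorem \ref{main} controls the first two contributions; the $L^2(\Gamma)$ multiplier term splits as $\lambda-\lambda_h=(\lambda-\Pi\lambda)+(\Pi\lambda-\lambda_h)$, where the continuous part is bounded by Lemma \ref{proj} with $\sigma=0$ and the discrete part by the strong-regularity hypothesis \eqref{streg} combined with Theorem \ref{main}; the normal-flux term splits as $u-u_h=(u-I_hu)+(I_hu-u_h)$ and is handled, respectively, by a scaled local trace inequality with Lemma \ref{inter} and by the inverse trace inequality of Lemma \ref{lem2}. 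The bound for $E_{\mathrm d}$ is cleaner because $z-I_hz$ and $\lambda_z-\Pi\lambda_z$ are already pure interpolation/projection errors, so Lemmas \ref{inter} and \ref{proj} together with a local trace inequality suffice. Invoking the elliptic regularity \eqref{nisreg} then yields $\|u-u_h\|_{L^2(\Omega)}^2\le Ch^2\bigl(\sum_i\|u_i\|_{H^2(\Omega_i)}\bigr)\|u-u_h\|_{L^2(\Omega)}$, and dividing through gives \eqref{s19}.

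I expect the main obstacle to be the continuity estimate for $\mathcal{A}_1$ when \emph{neither} slot carries a discrete argument, so Lemma \ref{bound} does not apply verbatim. In particular, the consistency stabilization $\int_\Gamma\gamma\{\!\!\{\beta\nabla v\cdot\nu\}\!\!\}\Lambda\,d\tau$ forces control of $h^{1/2}\|\{\!\!\{\beta\nabla(z-I_hz)\cdot\nu\}\!\!\}\|_{L^2(\Gamma)}$, which does not follow directly from Lemma \ref{inter} and must be derived by combining the $H^1$-interpolation estimate on each triangle $K$ with the scaled trace inequality $\|w\|_{L^2(\partial K)}^2\le C(h_K^{-1}\|w\|_{L^2(K)}^2+h_K\|\nabla w\|_{L^2(K)}^2)$ applied to $w=\nabla(z-I_hz)\cdot\nu$; this gives $\|\nabla(z-I_hz)\cdot\nu\|_{L^2(\Gamma_i)}^2\le Ch\|z_i\|_{H^2(\Omega_i)}^2$, which is exactly the $O(h)$ scaling required. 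Once this piece is in place the remaining bounds are a routine assembly of ingredients already available.
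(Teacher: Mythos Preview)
Your proposal is correct and follows essentially the same Aubin--Nitsche duality argument as the paper: exploit the symmetry of $\mathcal{A}_1$, insert the dual pair $(z,\lambda_z)$, subtract $(I_hz,\Pi\lambda_z)$ via Galerkin orthogonality \eqref{s27}, expand the seven terms of \eqref{s9} and bound them by Cauchy--Schwarz, then close with Theorem~\ref{main}, Lemmas~\ref{inter} and~\ref{proj}, and the regularity \eqref{nisreg}. The only cosmetic differences are that the paper estimates the $b$-terms with a weighted $L^2$--$L^2$ split (producing a $\gamma^{-1}\|[\![z-I_hz]\!]\|_{L^2(\Gamma)}^2$ contribution) rather than the $H^{1/2}_{00}$--$H^{-1/2}_{00}$ duality you propose, and controls $\gamma^{1/2}\|\beta\nabla(u-I_hu)\cdot\nu\|_{L^2(\Gamma)}$ via the global trace into $H^{3/2}(\Omega_i)$ together with Lemma~\ref{inter} at $l_1=3/2$, whereas you use the element-local scaled trace inequality; both routes yield the same $O(h)$ bound.
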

\vspace{-0.1cm}
\begin{proof}
Clearly $z$ satisfies $\mathcal{A}_{1}(z,\lambda_{z};u-u_{h},\lambda-\lambda_{h})
=(u-u_{h},u-u_{h})$, where $\lambda_{z}=\beta_{1}\nabla z_{1}\cdot\nu=
-\beta_{2}\nabla z_{2}\cdot\nu$ is the Lagrange multiplier. By symmetric property and  orthogonality,
\begin{align}
{||u-u_{h}||}^{2}_{L^{2}(\Omega)}&=\mathcal{A}_{1}(z-I_{h}z,\lambda_{z}
-\Pi\lambda_{z};u-u_{h},\lambda-\lambda_{h})\nonumber\\
&=\sum^{2}_{i=1}\int_{\Omega_{i}}\big(\beta_{i}\nabla(z_{i}-I_{h_{i}}z_{i})\cdot\nabla (u-u_{h})
+a_{i}(x)(z_{i}-I_{h_{i}}z_{i})(u-u_{h})\big)dx\nonumber\\
&+\int_{\Gamma}[\![u-u_{h}]\!](\lambda_{z}-\Pi\lambda_{z})d\tau+\int_{\Gamma}\gamma(\lambda_{z}-\Pi\lambda_{z})
\big\{\!\!\big\{\beta\nabla(u-u_{h})\cdot\nu\big\}\!\!\big\}d\tau\nonumber\\
&+\int_{\Gamma}[\![z-I_{h}z]\!](\lambda-\lambda_{h})d\tau-\int_{\Gamma}\gamma\big
\{\!\!\big\{\beta\nabla(z-I_{h}z)\cdot\nu\big\}\!\!\big\}\big\{\!\!\big\{\beta\nabla (u-u_{h})\cdot\nu\big\}\!\!\big\}d\tau\nonumber\\
&+\int_{\Gamma}\gamma\big\{\!\!\big\{\beta\nabla(z-I_{h}z)\cdot\nu\big\}\!\!\big\}
(\lambda-\lambda_{h})d\tau-\int_{\Gamma}\gamma(\lambda_{z}
-\Pi\lambda_{z})(\lambda-\lambda_{h})d\tau.\nonumber
\end{align}
\vspace{-0.1cm}
Using Cauchy-Schwarz inequality, the duality paring between $H^{-1/2}$ and $H^{1/2}$ and trace inequality, we find
\vspace{-0.1cm}
\begin{align}
 {||u-u_{h}||}^{2}_{L^{2}(\Omega)}
 \leq C&\bigg[{||z-I_{h}z||}_{X}{||u-u_{h}||}_{X}
 +{||u-u_{h}||}_{X}{||\lambda_{z}-\Pi\lambda_{z}||}_{H^{-1/2}(\Gamma)}\nonumber\\
 &+\gamma{||\lambda_{z}-\Pi\lambda_{z}||}_{L^{2}(\Gamma)}
 {\big|\big|\beta\nabla(u-u_{h})\cdot\nu\big|\big|}_{L^{2}(\Gamma)}\nonumber\\
 &+\gamma^{-1/2}{||[\![z-I_{h}z]\!]||}_{L^{2}(\Gamma)}\gamma^{1/2}
 {||\lambda-\lambda_{h}||}_{L^{2}(\Gamma)}\nonumber\\
 &+\gamma{\big|\big|\beta\nabla(z-I_{h}z)\cdot\nu\big|\big|}_{L^{2}(\Gamma)}
 {\big|\big|\beta\nabla(u-u_{h})\cdot\nu\big|\big|}_{L^{2}(\Gamma)}\nonumber\\
 &+\gamma{\big|\big|\beta\nabla(z-I_{h}z)\cdot\nu\big|\big|}_{L^{2}(\Gamma)}
 {||\lambda-\lambda_{h}||}_{L^{2}(\Gamma)}\nonumber\\
 &+\gamma{||\lambda_{z}-\Pi\lambda_{z}||}_{L^{2}(\Gamma)}{||\lambda-\lambda_{h}||}_{L^{2}(\Gamma)}\bigg]\nonumber\\
 &\leq C\bigg[
 {||z-I_{h}z||}^{2}_{X}+{||\lambda_{z}-\Pi\lambda_{z}||}^{2}_{H^{-1/2}(\Gamma)}
 +\gamma{||\lambda_{z}-\Pi\lambda_{z}||}^{2}_{L^{2}(\Gamma)}\nonumber\\
&+\gamma^{-1}{||[\![z-I_{h}z]\!]||}^{2}_{L^{2}(\Gamma)}
 +\gamma{\big|\big|\beta\nabla(z-I_{h}z)\cdot\nu\big|\big|}^{2}_{L^{2}(\Gamma)}\bigg]^{1/2}\nonumber\\
 &\times
 \bigg[
{||u-u_{h}||}^{2}_{X}+\gamma{||\lambda-\lambda_{h}||}^{2}_{L^{2}(\Gamma)}
+\gamma{\big|\big|\beta\nabla (u-u_{h})\cdot\nu\big|\big|}^{2}_{L^{2}(\Gamma)}
 \bigg]^{1/2}.\label{l1}
 \end{align}
 Using trace inequality and Lemma \ref{lem2}, we find
\begin{align}
\gamma{\big|\big|\beta\nabla(u-u_{h})\cdot\nu\big|\big|}^{2}_{L^{2}(\Gamma)}
&\leq C\bigg[\gamma{\big|\big|\beta\nabla(u-I_{h}u)\cdot\nu\big|\big|}^{2}_{L^{2}(\Gamma)}
+{\big|\big|\gamma^{1/2}\beta\nabla(I_{h}u-u_{h})
\cdot\nu\big|\big|}^{2}_{L^{2}(\Gamma)}\bigg]\nonumber\\
&\leq C\bigg[
\gamma\sum^{2}_{i=1}{||u_{i}-I_{h_{i}}u_{i}||}^{2}_{H^{3/2}(\Omega_{i})}
+\sum^{2}_{i=1}{||\nabla(I_{h_{i}}u_{i}-u_{h_{i}})||}^{2}_{L^{2}(\Omega_{i})}
\bigg]\nonumber\\
&\leq C\bigg[
\gamma\sum^{2}_{i=1}{||u_{i}-I_{h_{i}}u_{i}||}^{2}_{H^{3/2}(\Omega_{i})}
+{||u-I_{h}u||}^{2}_{X}+{||u-u_{h}||}^{2}_{X}
\bigg].\label{l2}
\end{align}
From \eqref{l1} and \eqref{l2} using Theorem \ref{main}, Lemma \ref{inter} and Lemma \ref{proj} with $\gamma=\gamma_{0}h$, we arrive at
\begin{equation}
{||u-u_{h}||}^{2}_{L^{2}(\Omega)}\leq C~h^{2}\sum^{2}_{i=1}{||z_{i}||}_{H^{2}(\Omega_{i})}\sum^{2}_{i=1}{||u_{i}||}_{H^{2}(\Omega_{i})}.
\end{equation}
Hence,  \eqref{s19} follows by using the regularity condition \eqref{nisreg}.\qquad
\end{proof}
\begin{remark}
 For unsymmetric case, when $S\in [0,1)$, the  $L^{2}$-error estimate is of $O(h^{3/2})$. However,
 with an additional assumption on the interpolants $I_{h}$ and $\Pi$ \textrm{i.e.},
\begin{equation}
h\bigg({\big|\big|\nabla I_{h}z\cdot\nu\big|\big|}^{2}_{L^{2}(\Gamma)}+{||\Pi\lambda_{z}||}^{2}_{L^{2}(\Gamma)}\bigg)\leq C~h^{2}
{\big|\big|\nabla z\cdot\nu\big|\big|}^{2}_{H^{1/2}(\Gamma)}
\end{equation}
we can establish optimal order of $L^{2}$-estimate in a similar way as in Barbosa \textrm{et al.} (see \cite{barbosa1}).
\end{remark}
\section{Matrix formulation}
The stabilized Nitsche's mortaring method \eqref{s16} can be represented in matrix form  by $AU=F$.

 The stiffness matrix
\begin{equation*}
A=\left(
    \begin{array}{ccccc}
      A^{1}_{ii} & A^{1}_{is} & 0 & 0 & 0 \\
      A^{1}_{si} & A^{1}_{ss} & 0 & 0 & Q_{s}+\frac{S\gamma}{2}R^{s}-\frac{S\gamma}{4}R^{s}_{n} \\
      0 & 0 & A^{2}_{ii} & A^{2}_{im} & 0 \\
      0 & 0 & A^{2}_{mi} & A^{2}_{mm} & -(Q_{s})^{T}+\frac{S\gamma}{2}(R^{m})^{T}-\frac{S\gamma}{4}(R^{m}_{n})^{T} \\
      0 & Q_{s}+\frac{\gamma}{2}R^{s} & 0 & -Q_{m}+\frac{\gamma}{2}R^{m} & -\gamma Q_{mm} \\
    \end{array}
  \right),
\end{equation*}

 where, for $1\leq l\leq 2$,
 \begin{align*}
 &(A^{l}_{ii})_{ij}=a(\phi^{(l)}_{i},\phi^{(l)}_{j}), \hspace{0.2cm}x^{l}_{i},x^{l}_{j}\in\Omega_{l},\hspace{0.2cm}
 (A^{1}_{is})_{ij}=a(\phi^{(1)}_{i},\phi^{s}_{j}), \hspace{0.2cm}x^{1}_{i}\in\Omega_{1},x^{s}_{j}\in\Gamma_{1},\\ &(A^{1}_{ss})_{ij}=a(\phi^{s}_{i},\phi^{s}_{j}),\hspace{0.2cm}x^{s}_{i},x^{s}_{j}\in\Gamma_{1},\hspace{0.2cm}
 (A^{2}_{im})_{ij}=a(\phi^{(2)}_{i},\phi^{m}_{j}),\hspace{0.2cm} x^{2}_{i}\in\Omega_{2}, x^{m}_{j}\in\Gamma_{2},\\ &(A^{2}_{mm})_{ij}=a(\phi^{m}_{i},\phi^{m}_{j}),\hspace{0.2cm}x^{m}_{i},x^{m}_{j}\in\Gamma_{2},\hspace{0.2cm}
 (Q_{s})_{ij}=\int_{\Gamma_{1}}\phi^{s}_{j}\psi_{i}d\tau,\hspace{0.2cm}(Q_{m})_{ij}
 =\int_{\Gamma_{2}}\psi^{m}_{j}\psi_{i}d\tau,\\
 &(Q_{mm})_{ij}=\int_{\Gamma_{2}}\psi_{i}\psi_{j}d\tau,
  (R^{s})_{ij}=\int_{\Gamma_{1}}\beta_{1}\nabla\phi^{s}_{j}\cdot \nu_{1}\psi_{i}d\tau,
 (R^{m})_{ij}=\int_{\Gamma_{2}}\beta_{2}\nabla\phi^{m}_{j}\cdot \nu_{2}\psi_{i}d\tau,\\
  &(R^{s}_{n})_{ij}=\int_{\Gamma_{1}}\beta_{1}\nabla\phi^{s}_{j}
 \cdot\nu_{1}\beta_{1}\nabla\phi^{s}_{i}\cdot\nu_{1}d\tau,
  (R^{m}_{n})_{ij}=\int_{\Gamma_{2}}\beta_{2}\nabla\phi^{m}_{j}\cdot\nu_{2}
 \beta_{2}\nabla\phi^{m}_{i}\cdot\nu_{2}d\tau.
\end{align*}
  Also, $U=\big(u^{1}_{i},u^{1}_{s},u^{2}_{i},u^{2}_{m},\lambda_{m}\big)^{T}$. The unknowns $u^{1}_{i}$ and $u^{2}_{i}$ are
  associated with the internal nodes in $\Omega_{1}$ and $\Omega_{2}$ respectively.   Unknown $u^{1}_{s}$ and $u^{1}_{m}$ are
  associated with $\Gamma_{1}$ and $\Gamma_{2}$ and $\lambda_{m}$ are the unknown Lagrange multipliers associated with $W_{h}$.

The load vector $F=(F^{1}_{i},F^{1}_{s},F^{2}_{i}, F^{2}_{m}, 0)^{T},$ where,
\begin{align*}
  &(F^{l}_{i})_{i}=(f^{l}_{i},\phi^{l}_{i}),\hspace{0.2cm}x^{l}_{i}\in\Omega_{l},\hspace{0.2cm} (F^{l}_{s})_{i}=(f^{l}_{i},\phi^{s}_{i}),\hspace{0.2cm}x^{l}_{i}\in\Omega_{l},x^{s}_{i}\in\Gamma_{1},\\ &(F^{l}_{s})_{i}=(f^{l}_{i},\phi^{m}_{i}),\hspace{0.2cm}x^{l}_{i}\in\Omega_{l},x^{m}_{i}\in\Gamma_{2}.
\end{align*}
 Here, $\psi_{i}$ are the nodal basis functions for $W_{h}$, $\phi^{s}_{j}$ and $\phi^{m}_{j}$ are the basis functions for $W_{h_{1}}$ and $W_{h_{2}}$ respectively.
\section{Numerical experiments}
We choose problem \eqref{s1}-\eqref{s4} over the unit square domain $\Omega=(0,1)\times (0,1)$.
We divide the domain $\Omega$  into two
equal subdomains $\Omega_{i},~ i=1,2$ (see Figure \ref{soln}). Each subdomains further subdivided into linear triangular elements of different mesh size $h_{i}$. We choose the penalty parameter to be $\gamma=O(h)$.  Set  $a_{1}$ and $a_{2}$ to be zero. We choose  $f$ such that the exact
solution of the problem is $u(x,y)=sin^{2} \pi x ~sin^{2}\pi y$.

The order of convergence `$p$' for the error ${||u-u_{h}||}_{L^{2}(\Omega)}$ and  the order of convergence `$q$' for the error ${||\lambda-\lambda_{h}||}_{L^{2}(\Gamma)}$ with respect to  the discretization parameter $h$ are computed by taking discontinuous coefficients pairs $(\beta_{1},\beta_{2})=(1,10)$, $(1,10^7)$, $(10^7,10^{-7})$ in the subdomains $\Omega_{1}$ and $\Omega_{2}$, see Tables \ref{table1}, \ref{table3}, \ref{table2}. Figure \ref{Order of Convergence} (a) shows the computed order of convergence for ${||u-u_{h}||}_{L^{2}(\Omega)}$ with respect to $h$ in the log-log scale.   Figure \ref{Order of Convergence} (b) shows the convergence rate of the Lagrange multiplier with respect to $h$. Note that, since
the exact solution is smooth, the convergence rates of error ${||u-u_{h}||}_{L^{2}(\Omega)}$ and  ${||\lambda-\lambda_{h}||}_{L^{2}(\Gamma)}$ are computationally obtained  as expected \textrm{i.e.}, $O(h^{2})$ and $O(h)$ respectively.
\begin{figure}[h]
\centering
\includegraphics*[width=13cm,height=7cm]{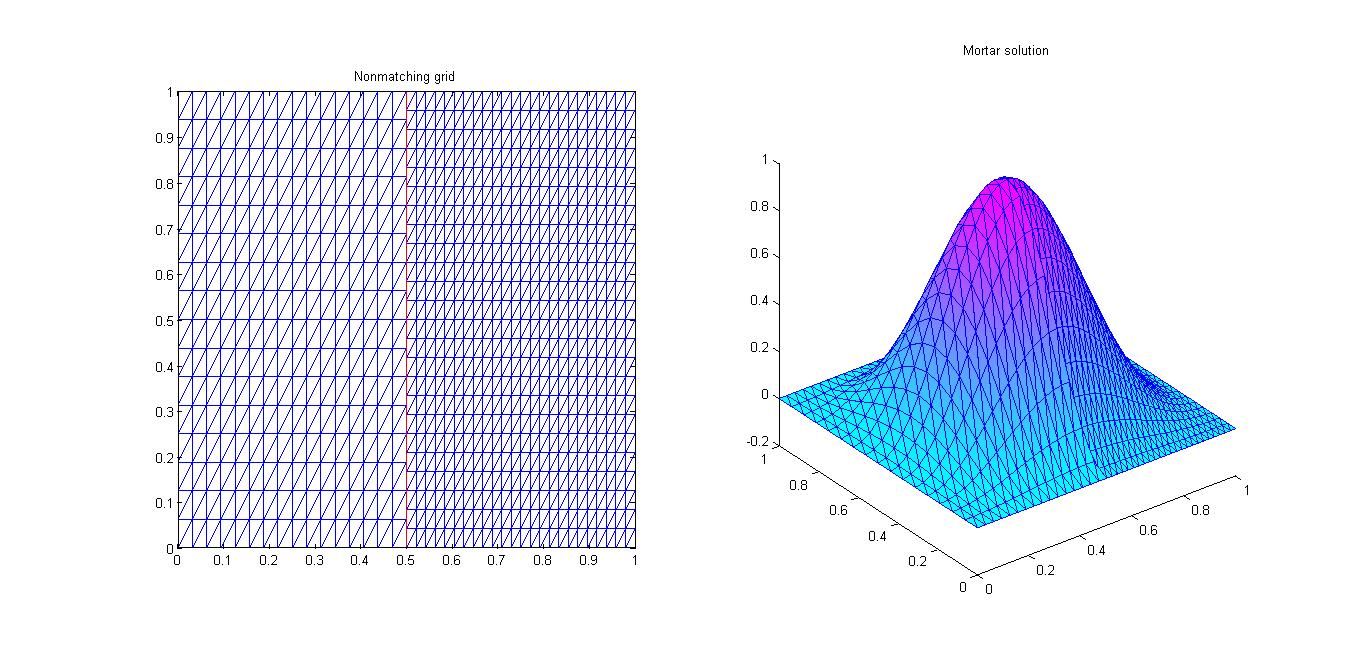}
\caption{Nonmatching grid and computed mortar solution with $(\beta_1,\beta_2)=(1,10)$ at refinement level $(h_1,h_2)=(1/16,1/24)$.}
\label{soln}
\end{figure}
\begin{table}[hbtp]
\caption{Order of
Convergence of ${||u-u_{h}||}_{L^{2}(\Omega)}$ and ${||\lambda-\lambda_{h}||}_{L^{2}(\Gamma)}$ with 
$\beta_1=1,\beta_2=10$}
\centering
\begin{tabular}{lclclc}
\hline
$(h_1,h_2)$&$h$&${\|e_u\|}_{L^2(\Omega)}$&${\|e_\lambda\|}_{L^2(\Gamma)}$&$p$&$q$\\
\hline
$(\frac{1}{4},\frac{1}{6})$&1/4&0.062158&0.43687&\\
$(\frac{1}{8},\frac{1}{12})$&1/8&0.016638&0.24098&1.901458062028300& 0.858290623104806\\
$(\frac{1}{16},\frac{1}{24})$&1/16&0.0041882&0.11229&1.990079779863557&1.101683961685886\\
$(\frac{1}{32},\frac{1}{48})$&1/32&0.0010422&0.049426& 2.006698177352890& 1.183887393718836\\
$(\frac{1}{64},\frac{1}{96})$&1/64&0.00025934&0.021541& 2.006715513479729&  1.198184929427100\\
\hline
\end{tabular}
\label{table1}
\end{table}

\begin{table}[hbtp]
\caption{Order of
Convergence of ${||u-u_{h}||}_{L^{2}(\Omega)}$  and ${||\lambda-\lambda_{h}||}_{L^{2}(\Gamma)}$ with 
$\beta_1=1,\beta_2=10^7$}
\centering
\begin{tabular}{lclclc}
\hline
$(h_1,h_2)$&$h$&${\|e_u\|}_{L^2(\Omega)}$&${\|e_\lambda\|}_{L^2(\Gamma)}$&$p$&$q$\\
\hline
$(\frac{1}{4},\frac{1}{6})$&1/4&0.061619&0.45209&\\
$(\frac{1}{8},\frac{1}{12})$&1/8&0.016431&0.25338& 1.906954983214551&  0.835307353359031  \\
$(\frac{1}{16},\frac{1}{24})$&1/16&0.0041218&0.11953&1.995073877668074&  1.083929897366207\\
$(\frac{1}{32},\frac{1}{48})$&1/32&0.0010233&0.052978 & 2.010045342634421&   1.173907469688573\\
$(\frac{1}{64},\frac{1}{96})$&1/64&0.00025429&0.023106 & 2.008682526770985&  1.197125851836898 \\
\hline
\end{tabular}
\label{table3}
\end{table}
\begin{table}[hbtp]
\caption{Order of
Convergence of ${||u-u_{h}||}_{L^{2}(\Omega)}$  and ${||\lambda-\lambda_{h}||}_{L^{2}(\Gamma)}$ with 
$\beta_1=10^{-7},\beta_2=10^7$}
\centering
\begin{tabular}{lclclc}
\hline
$(h_1,h_2)$&$h$&${\|e_u\|}_{L^2(\Omega)}$&${\|e_\lambda\|}_{L^2(\Gamma)}$&$p$&$q$\\
\hline
$(\frac{1}{4},\frac{1}{6})$&1/4&0.058294&1.8372e-07&\\
$(\frac{1}{8},\frac{1}{12})$&1/8&0.015655&6.2439e-08& 1.896723890974834&  1.556989351729802\\
$(\frac{1}{16},\frac{1}{24})$&1/16&0.0040013&2.366e-08& 1.968082803651483&1.399997358151351\\
$(\frac{1}{32},\frac{1}{48})$&1/32&0.0010066&1.032e-08&  1.990978296765009& 1.197007102916534\\
$(\frac{1}{64},\frac{1}{96})$&1/64&0.00025206&5.325e-09& 1.997651406176591&  0.954589540310056\\
\hline
\end{tabular}
\label{table2}
\end{table}
\begin{figure}
\centering \mbox{
        \subfigure[Order of Convergence of $||u-u_{h}||_{L^{2}(\Omega)}$ w.r.t.  $h$.]
                {
                        \includegraphics[height=5.0cm, width=6.5cm]{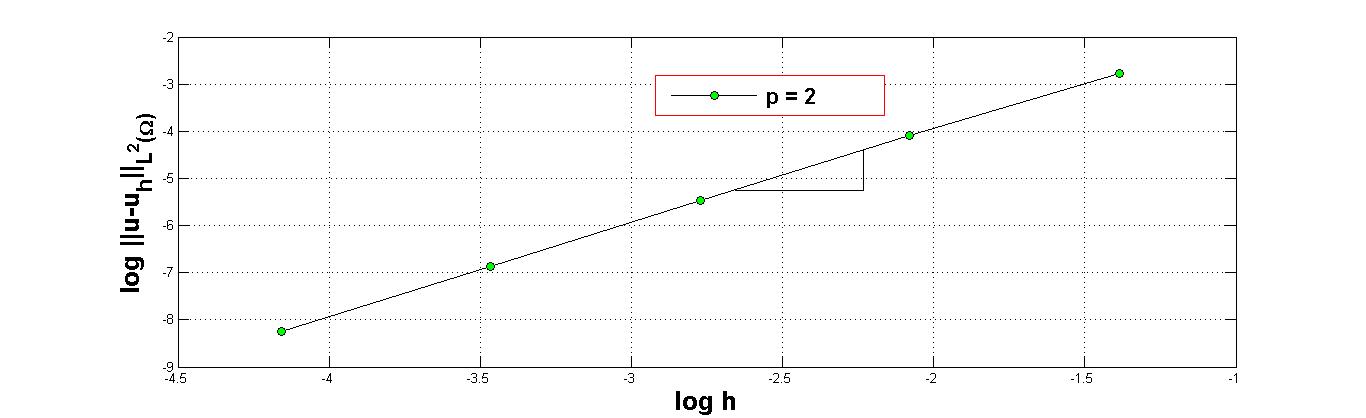}
                }
        \subfigure[Order of Convergence of $||\lambda-\lambda_{h}||_{L^{2}(\Gamma)}$ w.r.t. $h$.]
                {
                        \includegraphics[height=5.0cm, width=6.5cm]{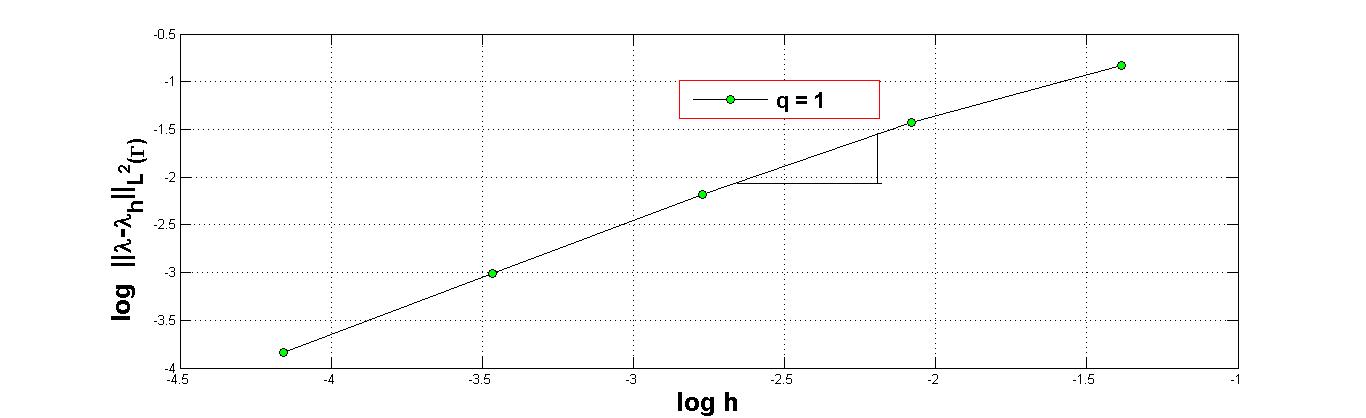}
                }}
\caption{Order of Convergence with discontinuous coefficients.  $\beta_1=1,\beta_2=10$}
\label{Order of Convergence}
\end{figure}
\newpage
\section{Conclusion}
 In order to alleviate the inf-sup condition in the mortar method with Lagrange multiplier,
 a stabilized method is presented and optimal error estimates are  obtained in natural norm
 which is independent of mesh. Numerical experiments presented here depict the performance
 of the method and supports the theoretical error estimates.  Here, the multipliers are simply
 the nodal basis functions restricted to the interface, one can consider  global polynomials as
 multipliers as in \cite{Hansbo} to avoid the cumbersome integration over unrelated meshes.



\end{document}